\numberwithin{equation}{section}
\def\p{\partial}
\def\o{\overline}
\def\b{\bar}
\def\mb{\mathbb}
\def\mc{\mathcal}
\def\n{\nabla}
\def\t{\triangle}
\def\wt{\widetilde}
\newtheorem{thm}{Theorem}[section]
\newtheorem{lemma}[thm]{Lemma}
\newtheorem{prop}[thm]{Proposition}
\newtheorem{cor}[thm]{Corollary}
\theoremstyle{definition}
\newtheorem{rem}[thm]{Remark}
\newtheorem{ex}[thm]{Example}
\theoremstyle{definition}
\newtheorem{defn}[thm]{Definition}
\newcommand{\comment}[1]{}
\begin{document}

\title{Remarks on the geodesic-Einstein metrics of a relative ample line bundle}

\author{Xueyuan Wan, With an appendix by Xu Wang}

\address{Xueyuan Wan: School of Mathematics, KIAS, Heogiro 85, Dongdaemun-gu Seoul, 02455, Republic of Korea}
\email{xwan@kias.re.kr}

\address{Xu Wang: Department of Mathematical Sciences, Norwegian University of Science and Technology, No-7491 Trondheim, Norway}
\email{xu.wang@ntnu.no}

\begin{abstract}
In this paper,  we introduce {the associated geodesic-Einstein flow for a relative ample line bundle $L$ over the total space $\mc{X}$ of a  holomorphic fibration and obtain a few} properties of that flow. {In particular,} we prove that the pair $(\mc{X}, L)$ is nonlinear semistable if the {associated} Donaldson type functional is bounded {from} below and the geodesic-Einstein flow has long-time {existence property}.  We also {define the associated}  $S$-classes and $C$-classes for $(\mc{X}, L)$ and {obtain} two inequalities between them when $L$ admits a geodesic-Einstein metric. Finally, in the appendix of this paper, we prove {that a relative ample line bundle is geodesic-Einstein if and only if an associated infinite rank bundle is Hermitian-Einstein}.
 \end{abstract}
\maketitle
\tableofcontents

\section*{Introduction}

Let $E$ be a holomorphic vector bundle over a compact K\"ahler manifold $(M,\omega)$.  A Hermitian metric $h$ on $E$ is called a Hermitian-Einstein metric if $\Lambda_{\omega}F_h=\lambda \text{Id}$ for some constant $\lambda$, where $F_h=\b{\p}(\p h\cdot h^{-1})\in A^{1,1}(M,\text{End}(E))$ is
the Chern curvature of the Hermitian metric $h$. The famous Donaldson-Uhlenbeck-Yau Theorem reveals the deep relationship between the stability of a holomorphic vector bundle and the existence of Hermitian-Einstein metrics (cf. \cite{Don0, Don1, Don2, Nara, Yau}). On the other hand, in the paper \cite{Feng}, we introduced the notion of the geodesic-Einstein metric on a relative ample line bundle $L$ over $\mc{X}$. For any holomorphic vector bundle $E$, there is a canonical associated projective bundle pair $(P(E), \mc{O}_{P(E)}(1))$. A geodesic-Einstein metric $\phi$ on $\mc{O}_{P(E)}(1)$ is a metric satisfying the geodesic-Einstein equation $tr_{\omega}c(\phi)=\lambda_{\mc{X},\mc{O}_{P(E)}(1)}$ (cf. {Definition \ref{GeoEin}}). In this case, a geodesic-Einstein metric is same as a Finsler-Einstein metric by the natural one-to-one correspondence between Finsler metrics on $E$ and the metrics on the tautological line bundle $\mc{O}_{P(E)}(-1)$ (cf. \cite[page 82]{Ko3}, \cite{Feng}). Combining with \cite[Theorem 0.3]{Feng}, it shows that the existence of geodesic-Einstein metrics is equivalent to the existence of Hermitian-Einstein metrics, which is also equivalent to polystability of the holomorphic vector bundle. %


More generally,  we consider a holomorphic fibration $\pi:\mc{X}\to M$ over a compact complex manifold $M$ with compact fibers, that is, $\pi:\mc{X}\to M$ is a proper surjective holomorphic mapping between complex manifolds $\mc{X}$ and $M$ whose differential has maximal rank everywhere such that every fiber is a compact complex manifold. Let $L$ be a relative ample line bundle over $\mc{X}$ and let $F^+(L)$ denote the space of all metrics $\phi$ such that $\sqrt{-1}\p\b{\p}\phi|_{\pi^{-1}(y)}>0$ for any $y\in M$. A metric $\phi\in F^+(L)$ is geodesic-Einstein if $$tr_{\omega}c(\phi)=\lambda_{\mc{X}, L}.$$ From the above discussions, one sees that the pair $(\mc{X}, L)$ with a geodesic-Einstein metric on $L$ is a natural generalization of Hermitian-Einstein vector bundles and contains many interesting topics studied in complex geometry as its special cases (cf. Section 5 in this paper). There are many interesting problems related to Hermitian-Einstein vector bundles. For examples, the Kobayashi-L\"ubke inequality \cite[Theorem 4.4.7]{Ko3}, the equivalence of the approximate Hermitian-Einstein structure and semistability \cite{Jacob, LZ}, the long times existence of Hermitian-Yang-Mills flow \cite[Proposition 20]{Don1}. Thus, it is natural to ask whether these similar results hold for a general pair $(\mc{X}, L)$ with admitting a geodesic-Einstein metric on $L$. This is also the motivation for the authors to study this topic around the geodesic-Einstein metrics.

Inspired by the Hermitian-Yang-Mills flow \cite{Atiyah, Don1}, for any initial metric $\phi_0\in F^+(L)$, we introduce the following so-called geodesic-Einstein flow:
\begin{align}\label{flow0}
\begin{cases}
&\frac{\p\phi}{\p t}=tr_{\omega}c(\phi)-\lambda\\
&\phi\in F^+(L)\\
&\phi(0)=\phi_0.
\end{cases}
\end{align}
If the pair $(\mc{X}, L)$ is exactly a projective bundle pair $(P(E), \mc{O}_{P(E)}(1))$, then the geodesic-Einstein flow (\ref{flow0}) is reduced to the Hermitian-Yang-Mills flow (see Remark \ref{rem1}). We prove the uniqueness of the solutions of (\ref{flow0}) and establish some estimates in this paper (see Proposition \ref{prop1} and Proposition \ref{prop}).
For any fixed metric $\psi\in F^+(L)$ on $L$ and any $\phi\in F^+(L)$, we also define a {\it Donaldson type functional}
\begin{align}\label{0.2}
    \mc{L}(\phi,\psi)=\int_{M}\left(\frac{\lambda}{m}\mc{E}(\phi,\psi)\wedge\omega-\frac{1}{n+1}\mc{E}_1(\phi,\psi)\right)\frac{\omega^{m-1}}{(m-1)!},
  \end{align}
where $\lambda$ is the constant given by (\ref{lamda}), $m=\dim M$, $n=\dim \mc{X}-\dim M$. Here $\mc{E}(\phi,\psi)$ and $\mc{E}_1(\phi,\psi)$ are defined by (\ref{E0}) and (\ref{E1}). We can prove that the critical points of $\mc{L}(\cdot,\psi)$ {are} exactly the geodesic-Einstein metrics.

Now we recall the definition of the nonlinear semistable pair $(\mc{X},L)$. A fibration $\mc{Y}\to M-S$, where $S$ is a closed subvariety in $M$ of ${\rm codim} S\geq 2$, is called a sub-fibration of the holomorphic fibration ${\mc X}\to M$ if for any $p\in M-S$, the fiber ${\mc Y}_p$ is a closed complex submanifold of the fiber ${\mc X}_p$. Let $\mc{F}$ be the set of sub-fibrations of the holomorphic fibration ${\mc X}\to M$.
For any $\mc{Y}\in \mathscr{F}$, we set
\begin{align*}
\lambda_{\mc{Y},L}=\frac{2\pi m}{{\dim\mc{Y}/M}+1}\frac{([\omega]^{m-1}c_1(L)^{{\dim\mc{Y}/M}+1})[\mc{Y}]}{([\omega]^mc_1(L)^{\dim\mc{Y}/M})[\mc{Y}]}.
\end{align*}
Note that $\lambda_{\mc{Y},L}$ is well-defined and independent of the metrics on $L$ by Stokes' theorem and ${\rm codim} S\geq 2$. Similar to the semistability of a holomorphic vector bundle,  a pair $(\mc{X}, L)$ is called {\it nonlinear semistable}  if $\lambda_{\mc{Y},L}\geq \lambda_{\mc{X},L}$  for any sub-fibration $\mc{Y}\in \mathscr{F}$ (see Definition \ref{stability}).

 A pair  $(\mc{X},L)$ admits an {\it approximate geodesic-Einstein structure} if for any given $\epsilon>0$, there exists an metric $\phi_{\epsilon}$ on $L$ such that
  $$\max_{ M}\left|\int_{\mc{X}/M}(tr_{\omega}c(\phi_{\epsilon})-\lambda)^2(\sqrt{-1}\p\b{\p}\phi_{\epsilon})^n\right|^{\frac{1}{2}}< \epsilon.$$
\begin{thm}\label{thm0.1}
  Suppose that the geodesic-Einstein flow (\ref{flow0}) has a smooth solution for $0\leq t<+\infty$, then we have implications $(1)\Rightarrow (2)\Rightarrow (3)$ for the following statements:
  \begin{itemize}
  \item[(1)] the  functional $\mc{L}$ is bounded from below;
  \item[(2)]  $L$ admits an approximate geodesic-Einstein structure;
  \item[(3)] the pair $(\mc{X},L)$ is nonlinear semistable.
  \end{itemize}
\end{thm}

 We also introduce the $S$-class and $C$-class associated to the pair  $(\mc{X},L)$, which are the generalizations of the Segre class and Chern class. For a general pair $(\mc{X},L)$, we define the total $S$-class and the $S$-classes of $L$ by
\begin{align*}
 S(L)=\sum_{i=0}^m S_i(L),\quad S_i(L)=\pi_{*}((c_1(L))^{n+i})\in H^{2i}(M,\mb{Z}), \quad 0\leq i\leq m,
\end{align*}
and the total $C$-class and the $C$-classes are defined by
\begin{align*}
C(L)=\frac{1}{S(L)},\quad 	C(L)=\sum_{i=0}^m C_i(L),\quad C_i(L)\in H^{2i}(M,\mb{Q}).
\end{align*}

\begin{thm}\label{thm0.3}
\begin{enumerate}
  \item  If $\phi$ is a geodesic-Einstein metric on $L$, i.e., $tr_{\omega}c(\phi)=\lambda_{\mc{X},L}$, then
  \begin{align*}
    S_2(L,\phi)\wedge \omega^{m-2}\leq \frac{(n+1)(n+2)}{8\pi^2 m^2}\lambda^2_{\mc{X},L}S_0(L)\omega^m,
  \end{align*}
  the equality holds if and only if $c(\phi)=\frac{\lambda_{\mc{X},L}}{m}\omega$. In particular,
  \begin{align*}
  \int_{M} S_2(L)\wedge [\omega]^{m-2}\leq 	\frac{(n+1)(n+2)}{8\pi^2 m^2}\lambda^2_{\mc{X},L}S_0(L)\int_{M}[\omega]^m.
  \end{align*}
\item  If $\phi$ is a geodesic-Einstein metric on $L$, then
  \begin{align*}
  (nC_1(L,\phi)^2-2(n+1)C_0(L)C_2(L,\phi))\wedge \omega^{m-2}\leq 0,
  \end{align*}
  the equality holds if and only if $c(\phi)=\frac{2\pi}{(n+1)S_0(L)}S_1(L,\phi)$. In particular,
  \begin{align*}
  \int_M (nC_1(L)^2-2(n+1)C_0(L)C_2(L))\wedge [\omega]^{m-2}\leq 0.	
  \end{align*}
\end{enumerate}
\end{thm}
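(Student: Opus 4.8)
The plan is to treat both inequalities as fibration analogues of the Kobayashi--L\"ubke inequality, reducing each to a pointwise linear-algebra estimate on the geodesic curvature $c(\phi)$ combined with the Einstein condition $tr_\omega c(\phi)=\lambda_{\mc{X},L}$. The essential input is the push-forward description of the $S$-forms. Writing the vertical--horizontal splitting of the relative curvature as $c_1(L,\phi)=\omega_F+\frac{1}{2\pi}c(\phi)$, where $\omega_F$ is the fibrewise K\"ahler form and $c(\phi)$ is horizontal (hence semi-basic), the identity $\omega_F^{\,n+1}=0$ together with the horizontality of $c(\phi)$ forces exactly one term of $c_1(L,\phi)^{n+k}$ to survive integration over the fibre, giving
\begin{align*}
S_k(L,\phi)=\binom{n+k}{k}\frac{1}{(2\pi)^k}\int_{\mc{X}/M}\omega_F^{\,n}\wedge c(\phi)^k,\qquad S_0(L)=\int_{\mc{X}/M}\omega_F^{\,n}.
\end{align*}
I will combine this with the standard pointwise identity that for $(1,1)$-forms $\eta,\zeta$ on $M$,
\begin{align*}
\eta\wedge\zeta\wedge\omega^{m-2}=\frac{1}{m(m-1)}\big[(tr_\omega\eta)(tr_\omega\zeta)-\langle\eta,\zeta\rangle_\omega\big]\omega^m,
\end{align*}
obtained by diagonalizing $\omega$ at a point.

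For part (1), I take $k=2$ and apply the identity with $\eta=\zeta=c(\phi)$, so that $c(\phi)^2\wedge\omega^{m-2}=\frac{1}{m(m-1)}[(tr_\omega c(\phi))^2-|c(\phi)|_\omega^2]\omega^m$. The Einstein condition replaces $tr_\omega c(\phi)$ by $\lambda_{\mc{X},L}$, and the pointwise Cauchy--Schwarz inequality $|c(\phi)|_\omega^2\ge\frac{1}{m}(tr_\omega c(\phi))^2=\frac{1}{m}\lambda_{\mc{X},L}^2$ then yields $c(\phi)^2\wedge\omega^{m-2}\le\frac{\lambda_{\mc{X},L}^2}{m^2}\omega^m$ pointwise on $\mc{X}$. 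Wedging with $\omega_F^{\,n}$, integrating over the fibre, and inserting the factor $\binom{n+2}{2}(2\pi)^{-2}=\frac{(n+1)(n+2)}{8\pi^2}$ produces the claimed bound, with $S_0(L)=\int_{\mc{X}/M}\omega_F^{\,n}$ appearing from the fibre integral. Equality in Cauchy--Schwarz holds exactly when all eigenvalues of $c(\phi)$ relative to $\omega$ coincide, i.e. $c(\phi)=\frac{\lambda_{\mc{X},L}}{m}\omega$.

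For part (2), I first convert the $C$-forms into $S$-forms using $C(L)=1/S(L)$: expanding the graded inverse gives $C_0=S_0^{-1}$, $C_1=-S_1S_0^{-2}$ and $C_2=S_1^2S_0^{-3}-S_2S_0^{-2}$, whence
\begin{align*}
nC_1^2-2(n+1)C_0C_2=\frac{1}{S_0^4}\big(2(n+1)S_0S_2-(n+2)S_1^2\big).
\end{align*}
Since $S_0>0$, the desired inequality is equivalent to $2(n+1)S_0S_2(L,\phi)\wedge\omega^{m-2}\le(n+2)S_1(L,\phi)^2\wedge\omega^{m-2}$. Substituting the push-forward formulas and cancelling common factors, this reduces to $\int_{\mc{X}/M}\omega_F^{\,n}\wedge c(\phi)^2\wedge\omega^{m-2}\le S_0(L)\,\widehat c^{\,2}\wedge\omega^{m-2}$, where $\widehat c:=S_0^{-1}\int_{\mc{X}/M}c(\phi)\wedge\omega_F^{\,n}$ is the fibre average of $c(\phi)$. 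Applying the pointwise identity to both sides and using that the Einstein condition gives $tr_\omega c(\phi)=tr_\omega\widehat c=\lambda_{\mc{X},L}$, the $\lambda_{\mc{X},L}^2$ terms cancel and the inequality collapses to $S_0(L)\,|\widehat c|_\omega^2\le\int_{\mc{X}/M}|c(\phi)|_\omega^2\,\omega_F^{\,n}$. This is precisely Jensen's inequality for the convex function $A\mapsto|A|_\omega^2$ applied to the probability measure $\omega_F^{\,n}/S_0(L)$ on each fibre, so it holds, with equality exactly when $c(\phi)$ is constant along the fibre; that constant is then $\widehat c=\frac{2\pi}{(n+1)S_0(L)}S_1(L,\phi)$, the stated equality condition. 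Integrating over $M$ gives the cohomological ``In particular'' statements.

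The main obstacle is twofold: first, establishing the push-forward formula cleanly, which requires the vertical--horizontal decomposition of $\sqrt{-1}\partial\bar\partial\phi$ and the verification that the cross terms vanish after fibre integration; and second---the genuinely new point compared with the vector-bundle Kobayashi--L\"ubke inequality---recognising that the residual estimate in part (2) is fibrewise Jensen rather than a base-direction Cauchy--Schwarz. In the vector-bundle case there is no fibre to average over, so this convexity mechanism is invisible; here it is what forces the precise coefficients $n$ and $2(n+1)$ and pins down the equality case as fibrewise constancy of the geodesic curvature.
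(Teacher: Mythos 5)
Your proposal is correct and follows essentially the same route as the paper: both rest on the push-forward formula $S_k(L,\phi)=\binom{n+k}{k}(2\pi)^{-k}\pi_*\left(c(\phi)^k\wedge\omega_F^n\right)$ coming from Lemma \ref{lemma0}, the pointwise identity $m(m-1)\,\alpha\wedge\alpha\wedge\omega^{m-2}=\left((tr_{\omega}\alpha)^2-|\alpha|^2_{\omega}\right)\omega^m$, the trace inequality $|c(\phi)|^2_{\omega}\geq \frac{1}{m}(tr_{\omega}c(\phi))^2$ for part (1), and for part (2) the fibrewise estimate $S_0(L)\,|\pi_*(c(\phi)\omega_F^n)/S_0(L)|^2_{\omega}\leq \pi_*\left(|c(\phi)|^2_{\omega}\,\omega_F^n\right)$, which the paper proves by componentwise Cauchy--Schwarz in normal coordinates and you phrase as Jensen's inequality for $|\cdot|^2_{\omega}$ against the fibre probability measure $\omega_F^n/S_0(L)$ --- the same mechanism, including the cancellation of the $\lambda^2_{\mc{X},L}$ terms and the equality case of fibrewise constancy of $c(\phi)$. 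Your only deviation is organizational: you first reduce $nC_1^2-2(n+1)C_0C_2$ to the single $S$-form inequality $2(n+1)S_0S_2\wedge\omega^{m-2}\leq (n+2)S_1^2\wedge\omega^{m-2}$ before estimating, whereas the paper expands $C_1(L,\phi)^2$ and $C_2(L,\phi)$ separately and sees the cancellation at the end.
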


We also consider the Hermitian-Einstein metrics on a quasi-vector bundle. Let $\pi: \mc{X}\to B$ be a proper holomorphic submersion from a complex manifold $\mc{X}$ to another complex manifold $B$ (need not compact), each fiber $X_t:=\pi^{-1}(t)$ is an $n$-dimensional compact complex manifold;
 $E$  a holomorphic vector bundle over $\mc{X}$, $E_t:=E|_{X_t}$;
 $\omega$  a d-closed $(1,1)$-form on $\mc{X}$ and is positive on each fiber, $\omega^t:=\omega|_{X_t}$;
 $h_E$ is a smooth Hermitian metric on $E$, $h_{E_t}:=h_E|_{E_t}$. For each $t\in B$, let us denote by $\mc{A}^{p,q}(E_t)$ the space of smooth $E_t$-valued $(p,q)$-forms on $X_t$. Denote
$$\mc{A}^{p,q}:=\{\mc{A}^{p,q}(E_t)\}_{t\in B},$$
and let $(\mc{A},\Gamma)$ denote the corresponding quasi-vector bundle (see Section \ref{Sec HE}). There is a natural Chern connection $D^{\mc{A}}$ on each $\mc{A}^{p,q}$ with respect the standard $L^2$-metric. The $L^2$-metric  on  $\mc{A}^{p,q}$ is called Hermitian-Einstein with respect to a Hermitian metric $\omega_B=\sqrt{-1}g_{i\b{j}}dt^i\wedge d\b{t}^j$ if
\begin{align}
\Lambda_{\omega_B}(D^{\mc{A}})^2=\lambda \text{Id}	
\end{align}
for some constant $\lambda$ (see Definition \ref{defn of HE}). Now we consider the case $p=q=0$ and $E$ is the trivial bundle. Let $L$ be a relative ample line bundle over $\mc{X}$, i.e. there exists a metric $\phi$ on $L$ such that its curvature $\omega:=\sqrt{-1}\p\b{\p}\phi$ is positive along each fiber. The $L^2$-metric  on $\mc{A}^{0,0}$ is given by
\begin{align}\label{L2-metric1}
	(u,v)=\int_{X_t} u\b{v}\frac{\omega^n}{n!}.
\end{align}
We call a metric $\phi$ on $L$ is weak geodesic-Einstein with respect to $\omega_B$ if $tr_{\omega_B}c(\phi)=\pi^*f(z)$ for some function $f(z)$ on $B$. Then
\begin{prop}\label{prop0.8}
	$\phi$ is a weak geodesic-Einstein metric on $L$ if and only if the $L^2$-metric (\ref{L2-metric1}) is a Hermitian-Einstein metric on $\mc{A}^{0,0}$. In particular, if $B$ is compact, then up to a smooth function on $B$, $\phi$ is a  geodesic-Einstein metric on $L$ if and only if the  $L^2$-metric (\ref{L2-metric1}) is a Hermitian-Einstein metric on $\mc{A}^{0,0}$.
\end{prop}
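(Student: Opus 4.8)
The plan is to reduce the statement to an explicit description of the curvature of the quasi-vector bundle $\mc{A}^{0,0}$ and then read off both equivalences algebraically. First I would make the Chern connection $D^{\mc{A}}$ explicit in local holomorphic coordinates $(t^i,v^\alpha)$ adapted to $\pi$, where the $t^i$ are pulled back from $B$ and the $v^\alpha$ are fiber coordinates. A local smooth section of $\mc{A}^{0,0}$ is a function $u$ on $\pi^{-1}(U)$; its $(0,1)$-part $D''=\b{\p}^{\mc{A}}$ is the operator induced by the complex structure of the family, and the $(1,0)$-part $D'$ is then determined by compatibility with the $L^2$-metric (\ref{L2-metric1}), which I would make explicit by differentiating $(u,v)=\int_{X_t}u\b{v}\,\omega^n/n!$ along a lift of $\p/\p t^i$ to $\mc{X}$ and integrating by parts over the compact fibers. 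The key step is the curvature computation: using the defining expression for the geodesic curvature $c(\phi)=\sqrt{-1}\,c_{i\b{j}}\,dt^i\w d\b{t}^j$ coming from the horizontal lift, I would establish the curvature identity
\begin{align*}
(D^{\mc{A}})^2 = c(\phi)\cdot \text{Id},
\end{align*}
so that the curvature of $\mc{A}^{0,0}$ acts fiberwise as multiplication by the geodesic-curvature form of $\phi$. This identification — the $\mc{A}^{0,0}$-analogue of Berndtsson's formula (Theorem \ref{BF1}) — is the heart of the matter and the step I expect to be the main obstacle, since it requires careful variational/Lie-derivative bookkeeping of the $L^2$-metric and the cancellation that turns a priori differential terms into pure multiplication by $c(\phi)$.

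Granting the curvature formula, the rest is formal. Contracting with $\omega_B=\sqrt{-1}g_{i\b{j}}\,dt^i\w d\b{t}^j$ gives
\begin{align*}
\Lambda_{\omega_B}(D^{\mc{A}})^2 = \big(g^{i\b{j}}c_{i\b{j}}\big)\cdot\text{Id} = \big(tr_{\omega_B}c(\phi)\big)\cdot\text{Id},
\end{align*}
so $\Lambda_{\omega_B}(D^{\mc{A}})^2$ is multiplication by the function $tr_{\omega_B}c(\phi)$ on each fiber $\mc{A}^{0,0}|_t=C^\infty(X_t)$. Multiplication by a function $Q$ on $C^\infty(X_t)$ equals $\lambda\,\text{Id}$ precisely when $Q\equiv\lambda$ on $X_t$ (test on the section $u\equiv 1$). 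Hence the $L^2$-metric is Hermitian-Einstein, $\Lambda_{\omega_B}(D^{\mc{A}})^2=\lambda\,\text{Id}$, if and only if $tr_{\omega_B}c(\phi)$ is constant along each fiber, i.e. $tr_{\omega_B}c(\phi)=\pi^*\lambda$ for a (fiberwise-constant) function $\lambda$ on $B$ — which is exactly the weak geodesic-Einstein condition with $f=\lambda$. This proves the first equivalence, the point being that the Einstein equation forces only fiberwise constancy, so the scalar is a priori a function on $B$ rather than a genuine constant.

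For the \emph{in particular} statement I would pass from the weak condition to a genuine geodesic-Einstein metric by adjusting $\phi$ with a pullback from the base. Assume $B$ compact and $tr_{\omega_B}c(\phi)=\pi^*f$. For $\chi\in C^\infty(B)$ set $\phi'=\phi+\pi^*\chi$; since $\pi^*\p\b{\p}\chi$ vanishes on fibers, $\phi'\in F^+(L)$ and the fiber metric is unchanged, while a direct check gives $c(\phi')_{i\b{j}}=c(\phi)_{i\b{j}}+\chi_{i\b{j}}$, hence $tr_{\omega_B}c(\phi')=\pi^*\big(f+\Delta_{\omega_B}\chi\big)$. To reach the constant $\lambda_{\mc{X},L}$ I would solve $\Delta_{\omega_B}\chi=\lambda_{\mc{X},L}-f$ on $B$; this is solvable because the right-hand side has zero $\omega_B$-average, which is precisely the content of the cohomological definition of $\lambda_{\mc{X},L}$ (the fiber integral of $\omega^n/n!$ being the constant $c_1(L)^n[X_t]/n!$). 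Thus, up to adding a smooth function on $B$, weak geodesic-Einstein metrics are genuine geodesic-Einstein metrics, and the Hermitian-Einstein property of the $L^2$-metric is equivalent to $\phi$ being geodesic-Einstein. The only delicate point here is the bookkeeping of which constant appears: the first equivalence yields an a priori fiberwise constant $\lambda=f(t)$, and compactness of $B$ together with the averaging identity for $\lambda_{\mc{X},L}$ is what upgrades this to the genuine Einstein constant.
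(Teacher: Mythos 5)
Your key step is false: the curvature of the Chern connection on $\mc{A}^{0,0}$ with the $L^2$-metric (\ref{L2-metric1}) is \emph{not} multiplication by the geodesic curvature form. Since here $E$ is trivial and $p=q=0$, the Kodaira--Spencer actions drop out and the paper computes $(D^{\mc{A}})^2u=\sum [V_j,V_{\b{k}}]u\,dt^j\w d\b{t}^k$, where by \cite[Lemma 6.1]{Wang1} the commutator of horizontal lifts is the \emph{vertical vector field} $[V_j,V_{\b{k}}]=(c(\phi)_{j\b{k}})_{\b{\lambda}}\phi^{\b{\lambda}\nu}\p/\p v^{\nu}-(c(\phi)_{j\b{k}})_{\nu}\phi^{\b{\lambda}\nu}\p/\p\b{v}^{\lambda}$, i.e.\ a first-order operator built from the vertical derivatives of $c(\phi)_{j\b{k}}$, with no zeroth-order part. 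You can see your identity $(D^{\mc{A}})^2=c(\phi)\cdot\mathrm{Id}$ fail already on the section $u\equiv 1$: a derivation annihilates constants, so $(D^{\mc{A}})^2(1)=0$, whereas your formula would give $c(\phi)\neq 0$ in general. The multiplication term $\int c(\phi)|u|^2e^{-\phi}$ in Berndtsson's Theorem \ref{BF1}, which you are transplanting, is present there precisely because sections of $\pi_*(L+K_{\mc{X}/M})$ are $L$-valued and the metric carries the weight $e^{-\phi}$; in $\mc{A}^{0,0}$ the bundle is untwisted and $\phi$ enters only through the horizontal lift and the fiber volume $\omega^n/n!$, so no such term arises.

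This is not a repairable bookkeeping slip, because your formula would prove a different (and incorrect) statement. With $(D^{\mc{A}})^2=c(\phi)\cdot\mathrm{Id}$, the condition $\Lambda_{\omega_B}(D^{\mc{A}})^2=\lambda\,\mathrm{Id}$ with $\lambda$ a genuine constant (as Definition \ref{defn of HE} requires) would force $tr_{\omega_B}c(\phi)\equiv\lambda$ on all of $\mc{X}$, i.e.\ the honest geodesic-Einstein equation, not the weak one; your passage from ``multiplication by $Q$ equals $\lambda\,\mathrm{Id}$'' to ``$Q$ fiberwise constant with $\lambda=f(t)$ a function on $B$'' silently relaxes the constancy of $\lambda$ and is inconsistent with your own curvature identity. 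Under the correct formula the logic runs the other way: $\Lambda_{\omega_B}(D^{\mc{A}})^2u=(tr_{\omega_B}c(\phi))_{\b{\lambda}}\phi^{\b{\lambda}\nu}\p_{\nu}u-(tr_{\omega_B}c(\phi))_{\nu}\phi^{\b{\lambda}\nu}\p_{\b{\lambda}}u$, and testing on $u\equiv1$ forces $\lambda=0$, while testing on general $u$ forces the vertical gradient of $tr_{\omega_B}c(\phi)$ to vanish --- exactly $tr_{\omega_B}c(\phi)=\pi^*f$, with $f$ unconstrained. In particular, a weak geodesic-Einstein metric with nonconstant $f$ is Hermitian-Einstein (with $\lambda=0$) in the paper's sense but would fail your criterion. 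Your second paragraph (adjusting $\phi$ by $\pi^*\chi$ with $\Delta\chi=\lambda_{\mc{X},L}-f$ on compact $B$) matches the paper's proof of Corollary \ref{cor5.5} and is fine, but it cannot rescue the first equivalence, on which the whole proposition rests.
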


This article is organized as follows. In Section \ref{sec1}, we will recall some basic definitions and facts on geodesic-Einstein metrics of a relative ample line bundle over a holomorphic fibration. In Section \ref{sec2}, we will define the geodesic-Einstein flow (\ref{flow0}) and the Donaldson type functional (\ref{0.2}),  and Theorem \ref{thm0.1} will be proved in this section. In Section \ref{sec3}, we will discuss the case of $tr_{
\omega}c(\phi)\geq 0$. In Section \ref{sec4}, we will give the definitions of $S$-class and $C$-class and prove Theorem \ref{thm0.3}.  In Section \ref{sec example}, we will discuss some examples around geodesic-Einstein metrics. In the last section, we will give the equivalence between the geodesic-Einstein metric on a relative ample line bundle and the Hermitian-Einstein metric on a quasi-vector bundle, and prove Proposition \ref{prop0.8}.


\section{Preliminaries}\label{sec1}

In this section, we will recall some basic definitions and facts on geodesic-Einstein metrics of a relative ample line bundle over holomorphic fibration. For more details one may refer to \cite{Feng, Wan1}.

Let $\pi:\mc{X}\to M$ be a holomorphic fibration over a compact complex manifold $M$ with compact fibers. We denote by
$(z;v)=(z^1,\cdots, z^m; v^1,\cdots, v^n)$ a local admissible holomorphic coordinate system of $\mc{X}$ with $\pi(z;v)=z$, where $m=\dim_{\mb{C}}M$, $n=\dim_{\mb{C}}\mc{X}-\dim_{\mb{C}}M$.

For any smooth function $\phi$ on $\mc{X}$, we denote
$$\phi_{\alpha}:=\frac{\p \phi}{\p z^{\alpha}},\quad \phi_{\b{\beta}}:=\frac{\p \phi}{\p \b{z}^{\beta}},
\quad \phi_{i}:=\frac{\p \phi}{\p v^i},\quad \phi_{\b{j}}:=\frac{\p \phi}{\p \b{v}^j},$$
where $1\leq i,j\leq n, 1\leq \alpha,\beta\leq m$.

For any holomorphic line bundle $L$ over $\mc{X}$, we denote by
$F^+(L)$  the space of smooth metrics $\phi$ on $L$ with
\begin{align*}
(\sqrt{-1}\p\b{\p}\phi)|_{\mc{X}_y}>0	
\end{align*}
for any point $y\in M$. Now we assume that $L$ is a relative ample line bundle, i.e. $F^+(L)\neq \emptyset$. For any $\phi\in F^+(L)$, set
\begin{align}\label{horizontal}
  \frac{\delta}{\delta z^{\alpha}}:=\frac{\p}{\p z^{\alpha}}-N^k_{\alpha}\frac{\p}{\p v^k},
\end{align}
where $N^k_{\alpha}=\phi_{\alpha\b{j}}\phi^{\b{j}k}$.
By a routine computation, one can show that $\{\frac{\delta}{\delta z^{\alpha}}\}_{1\leq \alpha\leq m}$ spans a well-defined horizontal subbundle of $T\mc{X}$ (see \cite[Section 1]{Feng}).

 Let $\{dz^{\alpha};\delta v^k\}$
denote the dual frame of $\left\{\frac{\delta}{\delta z^{\alpha}}; \frac{\p}{\p v^i}\right\}$. Then
$$\delta v^k=dv^k+\phi^{k\b{l}}\phi_{\b{l}\alpha}dz^{\alpha}.$$
Moreover, the differential operators
\begin{align}\label{HV}
\p^V=\frac{\p}{\p v^i}\otimes \delta v^i,\quad \p^H=\frac{\delta}{\delta z^{\alpha}}\otimes dz^{\alpha}.
\end{align}
are well-defined.

For any $\phi\in F^+(L)$, the geodesic curvature $c(\phi)$ is defined by
\begin{align*}
  c(\phi)=\left(\phi_{\alpha\b{\beta}}-\phi_{\alpha\b{j}}\phi^{i\b{j}}\phi_{i\b{\beta}}\right)\sqrt{-1} dz^{\alpha}\wedge d\b{z}^{\beta},
\end{align*}
which is a horizontal real $(1,1)$-form on $\mc X$.
Then we have the following decomposition.
\begin{lemma}[{\cite[Lemma 1.1]{Feng}}]\label{lemma0} The following decomposition holds,
  \begin{align*}
    \sqrt{-1}\p\b{\p}\phi=c(\phi)+\sqrt{-1}\phi_{i\b{j}}\delta v^i\wedge \delta \b{v}^j.
  \end{align*}
\end{lemma}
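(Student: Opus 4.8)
The plan is to verify the decomposition by evaluating $\sqrt{-1}\p\b{\p}\phi$ directly on the adapted frame $\{\frac{\delta}{\delta z^\alpha};\frac{\p}{\p v^i}\}$ and its conjugate, which is cleaner than substituting coframes. Write $w=(z;v)$, so that in the holomorphic coordinate frame $\sqrt{-1}\p\b{\p}\phi$ is the $(1,1)$-form with components $\phi_{A\b{B}}=\p_A\p_{\b{B}}\phi$, where $A,B$ range over all of $(z,v)$. Since $\{dz^\alpha;\delta v^k\}$ is dual to $\{\frac{\delta}{\delta z^\alpha};\frac{\p}{\p v^i}\}$, it suffices to compute the three distinct component blocks of $\sqrt{-1}\p\b{\p}\phi$ in this frame: the horizontal-horizontal block $\sqrt{-1}\p\b{\p}\phi(\frac{\delta}{\delta z^\alpha},\overline{\frac{\delta}{\delta z^\beta}})$, the vertical-vertical block $\sqrt{-1}\p\b{\p}\phi(\frac{\p}{\p v^i},\frac{\p}{\p\b{v}^j})$, and the mixed block $\sqrt{-1}\p\b{\p}\phi(\frac{\delta}{\delta z^\alpha},\frac{\p}{\p\b{v}^j})$.

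First I would show the mixed block vanishes. Using $\frac{\delta}{\delta z^\alpha}=\frac{\p}{\p z^\alpha}-N^k_\alpha\frac{\p}{\p v^k}$ and pairing against $\p_{\b{v}^j}$ gives $\phi_{\alpha\b{j}}-N^k_\alpha\phi_{k\b{j}}$; substituting $N^k_\alpha=\phi^{k\b{l}}\phi_{\b{l}\alpha}$ and using $\phi^{k\b{l}}\phi_{k\b{j}}=\delta^{\b{l}}_{\b{j}}$ collapses this to $\phi_{\alpha\b{j}}-\phi_{\alpha\b{j}}=0$. This is precisely the reason the coefficient $N^k_\alpha$ was defined as it was: the horizontal distribution is chosen to annihilate the off-diagonal part of $\sqrt{-1}\p\b{\p}\phi$. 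The conjugate mixed block $\sqrt{-1}\p\b{\p}\phi(\frac{\p}{\p v^i},\overline{\frac{\delta}{\delta z^\beta}})$ then vanishes by conjugation, so $\sqrt{-1}\p\b{\p}\phi$ is block diagonal in the splitting $T^{1,0}\mc{X}=H\oplus V$ and equals the sum of its horizontal and vertical parts.

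Next, the vertical block is immediate: $\sqrt{-1}\p\b{\p}\phi(\frac{\p}{\p v^i},\frac{\p}{\p\b{v}^j})=\phi_{i\b{j}}$, which reassembles into $\sqrt{-1}\phi_{i\b{j}}\delta v^i\wedge\delta\b{v}^j$, the second summand. For the horizontal block I would expand $\sqrt{-1}\p\b{\p}\phi(\frac{\delta}{\delta z^\alpha},\overline{\frac{\delta}{\delta z^\beta}})$ with $\overline{\frac{\delta}{\delta z^\beta}}=\frac{\p}{\p\b{z}^\beta}-\o{N^j_\beta}\frac{\p}{\p\b{v}^j}$, producing the four terms $\phi_{\alpha\b{\beta}}-\o{N^j_\beta}\phi_{\alpha\b{j}}-N^i_\alpha\phi_{i\b{\beta}}+N^i_\alpha\o{N^j_\beta}\phi_{i\b{j}}$. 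Substituting the definitions of $N$ and $\o{N}$ and contracting with the fiber metric, the last term reduces to $\phi_{\alpha\b{j}}\phi^{i\b{j}}\phi_{i\b{\beta}}$ and cancels one of the middle terms, leaving $\phi_{\alpha\b{\beta}}-\phi_{\alpha\b{j}}\phi^{i\b{j}}\phi_{i\b{\beta}}$, which is exactly the coefficient of $c(\phi)$. Reassembling gives $c(\phi)$ and completes the decomposition.

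The argument has no conceptual obstacle; the only delicacy is bookkeeping of the holomorphic and antiholomorphic fiber indices together with the repeated use of the inverse-matrix identities $\phi^{i\b{j}}\phi_{k\b{j}}=\delta^i_k$ and $\phi^{i\b{j}}\phi_{i\b{l}}=\delta^{\b{j}}_{\b{l}}$ needed to carry out the two cancellations (one in the mixed block, one in the horizontal block). I would fix the index conventions for $N^k_\alpha$ and $\o{N^j_\beta}$ at the outset, since the entire identity hinges on these contractions collapsing in the right way, and a misplaced conjugate or inverse index would spoil the cancellation.
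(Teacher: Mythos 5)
Your proposal is correct, and it is essentially the paper's own computation run in the dual direction: the paper substitutes $\delta v^i=dv^i+\phi^{i\b{l}}\phi_{\b{l}\alpha}dz^{\alpha}$ into the right-hand side and expands the wedge products, while you evaluate $\sqrt{-1}\p\b{\p}\phi$ on the adapted frame $\{\frac{\delta}{\delta z^{\alpha}};\frac{\p}{\p v^i}\}$ and check the three component blocks, with the same contractions $N^k_{\alpha}\phi_{k\b{j}}=\phi_{\alpha\b{j}}$ doing the cancellation in both versions. There is no gap; your block computations (mixed block zero, vertical block $\phi_{i\b{j}}$, horizontal block $\phi_{\alpha\b{\beta}}-\phi_{\alpha\b{j}}\phi^{i\b{j}}\phi_{i\b{\beta}}$) are all correct.
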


From \cite[Definition 1.2]{Feng}, the geodesic-Einstein metric is defined as follows.
\begin{defn}\label{GeoEin} Let $\omega=\sqrt{-1}g_{\alpha\b{\beta}}dz^{\alpha}\wedge d\b{z}^{\beta}$ be a (fixed) K\"{a}hler metric on $M$.
  A metric $\phi\in F^+(L)$ is called a geodesic-Einstein metric on $L$ with respect to $\omega$ if it satisfies that
  \begin{align}\label{FE}
    tr_{\omega}c(\phi):=g^{\alpha\b{\beta}}\left(\phi_{\alpha\b{\beta}}-\phi_{\alpha\b{j}}\phi^{i\b{j}}\phi_{i\b{\beta}}\right)=\lambda,
  \end{align}
  where $\lambda$ is a constant. By \cite[Proposition 1.3]{Feng}, if $M$ is compact, $\lambda$ is a topological quantity, which is given by
  \begin{align}\label{lamda}
     \lambda=\frac{2\pi m}{n+1}\frac{([\omega]^{m-1}c_1(L)^{n+1})[\mc{X}]}{([\omega]^mc_1(L)^n)[\mc{X}]}.
   \end{align}
\end{defn}


\section{Geodesic-Einstein flow and nonlinear semistable}\label{sec2}

For any fixed metric $\psi\in F^+(L)$ on $L$ and any $\phi\in F^+(L)$, one can define the following two functionals $\mc{E}, \mc{E}_1$:
 \begin{align}\label{E0}
   \mc{E}(\phi,\psi)=\frac{1}{n+1}\int_{\mc{X}/M}(\phi-\psi)\sum_{k=0}^n(\sqrt{-1}\p\b{\p}\phi)^k\wedge(\sqrt{-1}\p\b{\p}\psi)^{n-k}
 \end{align}
and
\begin{align}\label{E1}
  \mc{E}_1(\phi,\psi) =\frac{1}{n+2}\int_{\mc{X}/M}(\phi-\psi)\sum_{k=0}^{n+1}(\sqrt{-1}\p\b{\p}\phi)^k\wedge(\sqrt{-1}\p\b{\p}\psi)^{n+1-k}.
\end{align}
Note that $\mc{E}(\phi,\psi)$ is a smooth function, while $\mc{E}_1(\phi,\psi)$ is a smooth real $(1,1)$-form on $M$.

From \cite[(1.14)]{Feng}, the Donaldson type functional $\mc{L}$ on $F^+(L)$ is defined by
\begin{align}\label{D}
    \mc{L}(\phi,\psi)=\int_{M}\left(\frac{\lambda}{m}\mc{E}(\phi,\psi)\wedge\omega-\frac{1}{n+1}\mc{E}_1(\phi,\psi)\right)\frac{\omega^{m-1}}{(m-1)!},
  \end{align}
where $\lambda$ is the constant given by (\ref{lamda}).  Let $\phi_t$ be a smooth family of metrics depends on $t$, then the first variation of Donaldson type function is given by
\begin{align}\label{var}
\frac{d}{d t}\mc{L}(\phi_t,\psi)=-\int_{\mc{X}}\dot{\phi}_t(tr_{\omega}c(\phi_t)-\lambda)(\sqrt{-1}\p\b{\p}\phi_t)^n\wedge\frac{\omega^m}{m!},
\end{align}
(see \cite[(1.15)]{Feng}). So $\phi\in F^+(L)$ is a geodesic-Einstein metric if and only if
it is a critical point of $\mc{L}(\cdot,\psi)$ on $F^+(L)$ (see \cite[Proposition 1.4]{Feng}).

In order to make $\frac{d}{d t}\mc{L}(\phi_t,\psi)\leq 0$, it is natural to consider the following {\it geodesic-Einstein flow}
\begin{align}\label{flow}
\begin{cases}
&\frac{\p\phi}{\p t}=tr_{\omega}c(\phi)-\lambda\\
&\phi\in F^+(L)\\
&\phi(0)=\phi_0
\end{cases}
\end{align}
for an initial metric $\phi_0\in F^+(L)$. For the convenience, we also denote $\dot{\phi}_t:=\frac{\p\phi}{\p t}$
and $\ddot{\phi}_t=\frac{\p^2\phi}{\p t^2}$.

\subsection{Some properties of the geodesic-Einstein flow}

In this subsection, we will study some properties of the geodesic-Einstein flow (\ref{flow}) by  using the method of studying K\"ahler-Ricci flow (see e.g. \cite{Bou, Tos}).

For any smooth function $f\in C^{\infty}(\mc{X})$, we denote the horizontal and vertical Laplacian by
\begin{align}\label{lapa}
  \t_{\omega}f:=g^{\alpha\b{\beta}}(\p\b{\p}f)(\frac{\delta}{\delta z^{\alpha}},\frac{\delta}{\delta \b{z}^{\beta}}),\quad \t_{\phi}f:=\phi^{k\b{l}}\frac{\p^2 f}{\p v^k\p\b{v}^l},
\end{align}
respectively.
The following proposition is a maximum (minimum) principle for degenerate parabolic elliptic equations. Its proof is the same as \cite[Proposition 3.1.7]{Bou}.
\begin{prop}[{\cite[Proposition 3.1.7]{Bou}}]\label{max}
  Fix $T$ with $0<T\leq \infty$. Suppose that $f=f(x,t)$ is a smooth function on $\mc{X}\times [0,T)$ satisfying
  \begin{align}
    \left(\frac{\p}{\p t}-\t_{\omega}\right)f\leq 0 \,\,(resp. \geq 0).
  \end{align}
  Then $\sup_{(x,t)\in \mc{X}\times [0,T)}f(x,t)\leq \sup_{x\in \mc{X}}f(x,0)$ (resp. $\inf_{(x,t)\in \mc{X}\times [0,T)}f(x,t)\geq \inf_{x\in \mc{X}}f(x,0)$ ).
\end{prop}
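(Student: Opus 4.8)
The plan is to prove the maximum principle statement in Proposition \ref{max} by reducing it to the standard scalar maximum principle for a genuinely parabolic equation on $\mc{X}$, exploiting the fact that $\t_\omega$ is a (degenerate) second-order elliptic operator with no zeroth-order term. I will treat only the first case $(\frac{\p}{\p t}-\t_\omega)f \le 0$, since the $\geq$ case follows by applying the result to $-f$. Since $\mc{X}$ is compact (the fibers are compact and $M$ is compact), for any fixed $t$ the supremum of $f(\cdot,t)$ is attained.

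First I would introduce the auxiliary function $f_\e(x,t) := f(x,t) - \e t$ for a small parameter $\e>0$, in order to convert the non-strict inequality into a strict one: since $\t_\omega$ annihilates the purely $t$-dependent term, $f_\e$ satisfies $(\frac{\p}{\p t}-\t_\omega)f_\e \le -\e < 0$. It suffices to show $\sup_{\mc{X}\times[0,T']} f_\e \le \sup_{\mc{X}} f_\e(\cdot,0) = \sup_{\mc{X}} f(\cdot,0)$ on every compact time interval $[0,T']$ with $T'<T$, and then let $\e\to 0$ and $T'\to T$.

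Next I would argue by contradiction. If the conclusion failed for $f_\e$ on $[0,T']$, then by continuity and compactness of $\mc{X}\times[0,T']$ the maximum of $f_\e$ over this set would be attained at some interior-in-time point $(x_0,t_0)$ with $t_0>0$. At such a maximizer, elementary calculus gives $\frac{\p f_\e}{\p t}(x_0,t_0)\ge 0$ (since $t_0$ is a right-or-interior maximum in time) and, in any local coordinate frame, the spatial Hessian of $f_\e$ is negative semidefinite there. Contracting the complex Hessian against the nonnegative tensors $g^{\alpha\b\beta}$ and $\phi^{k\b l}$ appearing in the definitions (\ref{lapa}), I would conclude $\t_\omega f_\e(x_0,t_0)\le 0$, and hence $(\frac{\p}{\p t}-\t_\omega)f_\e(x_0,t_0)\ge 0$, contradicting the strict inequality $\le -\e<0$. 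This contradiction forces the maximum to occur at $t=0$, giving the desired bound.

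The main delicate point is handling the degeneracy of $\t_\omega$ together with the time-boundary behavior in a way that is honestly rigorous. The operator $\t_\omega$ is only the horizontal part of the Laplacian and need not be uniformly elliptic along the fibers, so I must be careful that the sign of $\t_\omega f_\e$ at a maximizer still follows purely from negative semidefiniteness of the Hessian contracted against the positive semidefinite metric coefficient $g^{\alpha\b\beta}$ — this is where the absence of any first-order cross terms in (\ref{lapa}), evaluated at a critical point where $\p f_\e = 0$, is essential. The other subtlety is that the maximizer in time could a priori lie at the endpoint $t_0=T'$ rather than strictly interior; there one only has a one-sided time derivative $\frac{\p f_\e}{\p t}\ge 0$, which is nonetheless enough to produce the contradiction. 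Once these two points are dispatched, the $\e\to 0$ limit is routine and the proof closes.
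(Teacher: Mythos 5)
Your proposal is correct and follows essentially the same route as the paper: the auxiliary function $f_\e=f-\e t$, a contradiction at an interior-in-time maximizer using $\frac{\p f_\e}{\p t}\geq 0$ and $\sqrt{-1}\p\b{\p}f_\e\leq 0$ contracted against $g^{\alpha\b{\beta}}$ along the horizontal lifts, then letting $\e\to 0$ and exhausting $[0,T)$ by compact intervals. One small remark: your mention of $\phi^{k\b{l}}$ and of criticality is superfluous, since $\t_{\omega}$ as defined in (\ref{lapa}) involves only $g^{\alpha\b{\beta}}$ and the $(1,1)$-form $\p\b{\p}f_\e$ evaluated at the horizontal vectors $\frac{\delta}{\delta z^{\alpha}}$ contains no first-order terms, so negative semidefiniteness of $\sqrt{-1}\p\b{\p}f_\e$ at the maximizer already yields $\t_{\omega}f_\e\leq 0$ there.
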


By above proposition, we can prove the uniqueness of the solutions of the flow (\ref{flow}).
\begin{prop}\label{prop1}
  If $\phi(t)$ and $\psi(t)$ are the two solutions of the flow (\ref{flow}), then $\phi(t)=\psi(t)$.
\end{prop}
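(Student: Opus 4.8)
The plan is to reduce the uniqueness claim to an application of the maximum principle (Proposition \ref{max}) to the difference $u := \phi - \psi$ of two solutions. First I would note that both $\phi(t)$ and $\psi(t)$ satisfy the same initial condition, so $u(0) = \phi_0 - \phi_0 = 0$, and I want to show $u(t) \equiv 0$ for all $t$. Subtracting the two evolution equations gives
\begin{align*}
\frac{\p u}{\p t} = tr_{\omega}c(\phi) - tr_{\omega}c(\psi).
\end{align*}
The essential step is to express the right-hand side in a form amenable to Proposition \ref{max}, namely as a sum of $\t_\omega u$ plus lower-order terms controlled by $u$ itself. By the definition (\ref{FE}) of $tr_\omega c(\phi)$, the difference $tr_\omega c(\phi) - tr_\omega c(\psi)$ involves the second-order horizontal derivatives $\phi_{\alpha\b\beta} - \psi_{\alpha\b\beta}$, which produce exactly the leading Laplacian-type term, together with first-order and zeroth-order contributions coming from the mixed terms $\phi_{\alpha\b j}\phi^{i\b j}\phi_{i\b\beta}$ and the fiber metric $\phi_{i\b j}$.

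The key step is to carry out this linearization carefully and show that the difference operator acting on $u$ has the structure of a second-order degenerate parabolic operator whose principal part is $\t_\omega$ (the horizontal Laplacian) plus the vertical contribution, with no zeroth-order term in $u$ that could prevent the use of the comparison principle. Concretely, I expect that after writing $\phi = \psi + u$ and expanding, the evolution for $u$ takes the form
\begin{align*}
\left(\frac{\p}{\p t} - \t_\omega\right)u = (\text{terms that are } \leq 0 \text{ at an interior max, } \geq 0 \text{ at an interior min}),
\end{align*}
where the correction terms involve the vertical Laplacian $\t_\phi$ or $\t_\psi$ of $u$ and first derivatives, all of which have a favorable sign at an extremum of $u$. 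Since both metrics lie in $F^+(L)$, the fiberwise positivity $\phi_{i\b j} > 0$ and $\psi_{i\b j} > 0$ guarantees that the vertical part is genuinely elliptic along fibers, so at an interior spatial maximum of $u(\cdot, t)$ the vertical second-order term is nonpositive and can be absorbed.

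Once the inequality $\left(\frac{\p}{\p t} - \t_\omega\right)u \leq 0$ is established (and symmetrically $\geq 0$ by the same argument applied to $-u$, i.e. by interchanging the roles of $\phi$ and $\psi$), Proposition \ref{max} yields $\sup_{\mc X \times [0,T)} u \leq \sup_{\mc X} u(\cdot,0) = 0$ and $\inf u \geq 0$, forcing $u \equiv 0$ and hence $\phi(t) = \psi(t)$. The main obstacle I anticipate is the algebra of the linearization: verifying that the terms generated by differencing the nonlinear quantity $\phi_{\alpha\b j}\phi^{i\b j}\phi_{i\b\beta}$ genuinely combine into (a multiple of) $\t_\omega u$ plus sign-definite second-order vertical terms, rather than leaving behind an uncontrolled zeroth-order coefficient. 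This requires using the geometry of the horizontal lift (\ref{horizontal}) and the decomposition in Lemma \ref{lemma0} to organize the mixed derivatives correctly, and it is the computational heart of the proof; the maximum-principle step itself is then immediate.
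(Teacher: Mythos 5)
Your overall strategy---apply the $\epsilon t$-perturbed maximum principle to $u=\phi-\psi$---matches the paper's, but the step you explicitly defer (``the computational heart'') is the entire content of the proof, and the structure you predict for it is not the right one, so as written there is a genuine gap. The paper never linearizes. At a maximum point of $\phi-\psi-\epsilon t$ with $t>0$ one has $\sqrt{-1}\p\b{\p}(\phi-\psi)\leq 0$ there, and the decisive mechanism is a minimizing property of horizontal lifts: evaluating the two Hessians on the $\psi$-horizontal frame and using Lemma \ref{lemma0},
\begin{align*}
tr_{\omega}c(\psi)=g^{\alpha\b{\beta}}(\p\b{\p}\psi)\Bigl(\frac{\delta_{\psi}}{\delta z^{\alpha}},\frac{\delta_{\psi}}{\delta \b{z}^{\beta}}\Bigr)\geq g^{\alpha\b{\beta}}(\p\b{\p}\phi)\Bigl(\frac{\delta_{\psi}}{\delta z^{\alpha}},\frac{\delta_{\psi}}{\delta \b{z}^{\beta}}\Bigr)=tr_{\omega}c(\phi)+g^{\alpha\b{\beta}}(\p\b{\p}\phi)(V_{\alpha},\b{V}_{\beta})\geq tr_{\omega}c(\phi),
\end{align*}
where $V_{\alpha}=\frac{\delta_{\psi}}{\delta z^{\alpha}}-\frac{\delta_{\phi}}{\delta z^{\alpha}}$ is vertical: the cross terms vanish because the vertical part of $\sqrt{-1}\p\b{\p}\phi$ annihilates the $\phi$-horizontal frame, and the last term is nonnegative by the fiberwise positivity $\phi\in F^+(L)$. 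This contradicts $\p_t(\phi-\psi-\epsilon t)\geq 0$, i.e.\ $tr_{\omega}c(\phi)-tr_{\omega}c(\psi)\geq\epsilon$, at that point; uniqueness follows without ever writing a parabolic equation for $u$. This pointwise comparison at the extremum is the idea your sketch is missing.

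Your linearization route can in fact be completed, but not in the form you anticipate, and that discrepancy is where your argument would fail if pushed as stated. A direct computation (it is what underlies (\ref{sec5.1})) shows that the first variation of $\phi\mapsto tr_{\omega}c(\phi)$ in a direction $u$ is \emph{exactly} $g^{\alpha\b{\beta}}(\p\b{\p}u)\bigl(\frac{\delta_{\phi}}{\delta z^{\alpha}},\frac{\delta_{\phi}}{\delta \b{z}^{\beta}}\bigr)$, the horizontal Laplacian taken with respect to $\phi$: there are \emph{no} vertical second-order terms and \emph{no} zeroth-order terms at all. Hence, since $\phi_s=s\phi+(1-s)\psi$ stays in $F^+(L)$ by fiberwise convexity, $\p_t u=\int_0^1 \t_{\omega}^{(s)}u\,ds$, an averaged family of degenerate-elliptic horizontal Laplacians; at an interior maximum $\sqrt{-1}\p\b{\p}u\leq 0$ forces each $\t_{\omega}^{(s)}u\leq 0$, so the proof of Proposition \ref{max} applies verbatim even though the operator is $t$- and $s$-dependent (note Proposition \ref{max} as stated uses a fixed horizontal distribution, a point you would need to address). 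By contrast, your expected ``vertical Laplacian corrections with favorable sign at an extremum'' cannot simply be assumed: had such terms appeared with coefficients like $tr_{\omega}c(\phi_s)$---which has no sign in general---the maximum principle would not close. So the proposal asserts, rather than establishes, the one inequality on which everything rests.
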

\begin{proof}
  We assume that $T_{\max}$ is the maximum existence time and let $\tau<T_{\max}$. Let $\phi(t)$ and $\psi(t)$ be the two solutions with the same initial metric $\phi_0$. Then
  $$(\phi-\psi)'_t=tr_{\omega}c(\phi)-tr_{\omega}c(\psi),\quad (\phi-\psi)(0)=0.$$
  We assume that on $\mc{X}\times [0,\tau]$, the maximum of $\phi-\psi-\epsilon t$ is taken at $(x,t)$ in $\mc{X}\times [0,\tau]$, $t>0$. Then
  \begin{align}\label{1.2}
    0\leq (\phi-\psi-\epsilon t)'_t=tr_{\omega}c(\phi)-tr_{\omega}c(\psi)-\epsilon
  \end{align}
  and $-\sqrt{-1}\p\b{\p}(\phi-\psi-\epsilon t)=-\sqrt{-1}\p\b{\p}(\phi-\psi)$ is semi-positive  $(1,1)$-form at the point $(x,t)$. Denote by $\frac{\delta_{\phi}}{\delta z^{\alpha}}$ (resp. $\frac{\delta_{\psi}}{\delta z^{\alpha}}$) the horizontal lifts with respect to $\phi$ (resp. $\psi$), which are given by (\ref{horizontal}). Then at the point $(x,t)$, one has
  \begin{align}\label{1.1}
  \begin{split}
  	tr_{\omega}c(\psi)&=g^{\alpha\b{\beta}}(\p\b{\p}\psi)(\frac{\delta_{\psi}}{\delta z^{\alpha}},\frac{\delta_{\psi}}{\delta \b{z}^{\beta}})\\
  	&\geq g^{\alpha\b{\beta}}(\p\b{\p}\phi)(\frac{\delta_{\psi}}{\delta z^{\alpha}},\frac{\delta_{\psi}}{\delta \b{z}^{\beta}})\\
  	&=g^{\alpha\b{\beta}}(\p\b{\p}\phi)\left((\frac{\delta_{\psi}}{\delta z^{\alpha}}-\frac{\delta_{\phi}}{\delta z^{\alpha}})+\frac{\delta_{\phi}}{\delta z^{\alpha}},(\frac{\delta_{\psi}}{\delta \b{z}^{\beta}}-\frac{\delta_{\phi}}{\delta \b{z}^{\beta}})+\frac{\delta_{\phi}}{\delta \b{z}^{\beta}}\right)\\
  	&=tr_{\omega}c(\phi)+g^{\alpha\b{\beta}}(\p\b{\p}\phi)\left(\frac{\delta_{\psi}}{\delta z^{\alpha}}-\frac{\delta_{\phi}}{\delta z^{\alpha}},\frac{\delta_{\psi}}{\delta \b{z}^{\beta}}-\frac{\delta_{\phi}}{\delta \b{z}^{\beta}}\right)\geq tr_{\omega}c(\phi),
  \end{split}	
  \end{align}
where the fourth equality holds by Lemma \ref{lemma0} and noting that $\frac{\delta_{\psi}}{\delta \b{z}^{\beta}}-\frac{\delta_{\phi}}{\delta \b{z}^{\beta}}$ is a vertical vector, the last inequality follows from $\phi\in F^+(L)$.

  Substituting (\ref{1.1}) into (\ref{1.2}) we get a contradiction. So the maximum of $\phi-\psi-\epsilon t$ is taken at $t=0$, i.e.
   $$\max_{x\in\mc{X}\times[0,\tau]}(\phi-\psi-\epsilon t)=\max_{x\in\mc{X}}(\phi(0)-\psi(0))=0.$$
   Thus
   $$\phi-\psi\leq \epsilon t\leq \epsilon\tau.$$
   It follows that
   $$\phi\leq \psi$$
   for any $[0,\tau], \tau<T_{\max}$. Thus $\phi\leq \psi$ on $[0,T_{\max})$. Similarly, we have
   $$\psi\leq \phi.$$
   Therefore, $\psi(t)=\phi(t)$.
\end{proof}

Also by Proposition \ref{max}, we obtain the following estimates.
\begin{prop}\label{prop} Along the flow (\ref{flow}), one has
\begin{align}
|tr_{\omega}c(\phi)|<C \quad\text{and}\quad  |\phi(t)-\phi_0|\leq Ct 	
\end{align}
	for some constant $C>0$.
\end{prop}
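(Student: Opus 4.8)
The plan is to derive a linear parabolic equation for the time derivative $\dot\phi_t=tr_\omega c(\phi)-\lambda$ and then invoke the maximum principle of Proposition \ref{max}; the $C^0$-estimate $|\phi(t)-\phi_0|\leq Ct$ will come for free by integrating the first bound in $t$.

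First I would differentiate the flow equation $\dot\phi_t=tr_\omega c(\phi)-\lambda$ in $t$ to get $\ddot\phi_t=\frac{\partial}{\partial t}tr_\omega c(\phi)$, which reduces the problem to linearizing the nonlinear operator
\[
\phi\longmapsto tr_\omega c(\phi)=g^{\alpha\bar\beta}\left(\phi_{\alpha\bar\beta}-\phi_{\alpha\bar j}\phi^{i\bar j}\phi_{i\bar\beta}\right)
\]
in the direction $u:=\dot\phi_t$. Applying the product rule, and using $\frac{d}{dt}\phi^{i\bar j}=-\phi^{i\bar l}u_{k\bar l}\phi^{k\bar j}$ for the variation of the inverse fibre metric, produces, besides $g^{\alpha\bar\beta}u_{\alpha\bar\beta}$, three further terms. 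The key observation is that these reassemble precisely into the coordinate expansion of $g^{\alpha\bar\beta}(\partial\bar\partial u)\!\left(\frac{\delta}{\delta z^\alpha},\frac{\delta}{\delta\bar z^\beta}\right)$, where $\frac{\delta}{\delta z^\alpha}=\frac{\partial}{\partial z^\alpha}-N^k_\alpha\frac{\partial}{\partial v^k}$ with $N^k_\alpha=\phi_{\alpha\bar j}\phi^{k\bar j}$ is the horizontal lift induced by the current metric $\phi$. By the definition (\ref{lapa}) of $\Delta_\omega$, this is exactly $\Delta_\omega u$, so I expect the clean identity
\[
\ddot\phi_t=\Delta_\omega\dot\phi_t,\qquad\text{i.e.}\qquad\left(\frac{\partial}{\partial t}-\Delta_\omega\right)\dot\phi_t=0.
\]

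With this equation established, I would apply Proposition \ref{max} to $f=\dot\phi_t$ in both directions at once: since $\left(\frac{\partial}{\partial t}-\Delta_\omega\right)\dot\phi_t=0$ is simultaneously $\leq 0$ and $\geq 0$, it yields $\inf_{\mc{X}}\dot\phi_0\leq\dot\phi_t\leq\sup_{\mc{X}}\dot\phi_0$ on $\mc{X}\times[0,T)$. Taking $C:=\max_{\mc{X}}|tr_\omega c(\phi_0)-\lambda|+|\lambda|$ then gives $|\dot\phi_t|=|tr_\omega c(\phi)-\lambda|\leq C$ and hence $|tr_\omega c(\phi)|\leq C$, the first estimate. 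For the second, writing $\phi(t)-\phi_0=\int_0^t\dot\phi_s\,ds$ and bounding the integrand pointwise by $C$ immediately gives $|\phi(t)-\phi_0|\leq Ct$.

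The main obstacle is the linearization identity $\ddot\phi_t=\Delta_\omega\dot\phi_t$: one must carry out the product-rule computation carefully and recognize that the nonlinear operator $tr_\omega c(\phi)$ has, as its linearization, precisely the horizontal Laplacian associated with its own induced horizontal distribution. This is what makes the flow (\ref{flow}) fibrewise-degenerate parabolic in exactly the form required by Proposition \ref{max}; once it is in place, the two estimates are immediate.
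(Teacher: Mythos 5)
Your proposal is correct and is essentially the paper's own argument: both derive the degenerate heat equation $\left(\frac{\p}{\p t}-\t_{\omega}\right)\dot{\phi}_t=0$ (equation (\ref{sec5.1})), apply the maximum principle of Proposition \ref{max} to bound $|\dot{\phi}_t|$ by its initial value (giving $|tr_{\omega}c(\phi)|\leq \max_{\mc{X}}|tr_{\omega}c(\phi_0)-\lambda|+|\lambda|$), and integrate in $t$ to get $|\phi(t)-\phi_0|\leq Ct$. The only, and harmless, difference is how (\ref{sec5.1}) is obtained: you linearize $tr_{\omega}c(\phi)$ directly via $\frac{d}{dt}\phi^{i\b{j}}=-\phi^{i\b{l}}\dot{\phi}_{k\b{l}}\phi^{k\b{j}}$ and recognize the four resulting terms as the horizontal Laplacian for the $\phi(t)$-induced lift, whereas the paper differentiates the equivalent wedge identity (\ref{geolam}), $(\dot{\phi}_t+\lambda)\,\omega^m\wedge(\sqrt{-1}\p\b{\p}\phi)^n=\frac{m}{n+1}\omega^{m-1}\wedge(\sqrt{-1}\p\b{\p}\phi)^{n+1}$, and cancels the common term $tr_{\omega}c(\phi)\t_{\phi}\dot{\phi}_t$ — both computations yield the same equation.
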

\begin{proof}
By Lemma \ref{lemma0} and (\ref{flow}), one has
\begin{align}\label{geolam}
	 (\dot{\phi}_t+\lambda)\omega^m\wedge(\sqrt{-1}\p\b{\p}\phi)^n=tr_{\omega}c(\phi)\omega^m\wedge(\sqrt{-1}\p\b{\p}\phi)^n=\frac{m}{n+1}\omega^{m-1}\wedge (\sqrt{-1}\p\b{\p}\phi)^{n+1}.
\end{align}

Taking derivative on $t$ to the both sides of above equation, then
\begin{align}\label{left}
  \begin{split}
  &\quad \frac{d}{dt}(\dot{\phi}_t+\lambda)\omega^m\wedge(\sqrt{-1}\p\b{\p}\phi)^n \\
   &=\ddot{\phi}_t\omega^m\wedge (\sqrt{-1}\p\b{\p}\phi)^n+(\dot{\phi}_t+\lambda)\omega^m\wedge (\sqrt{-1}\p\b{\p}\phi)^{n-1}\wedge (\sqrt{-1}\p\b{\p}\dot{\phi}_t)\\
    &=(\ddot{\phi}_t+tr_{\omega}c(\phi)\t_{\phi}\dot{\phi}_t)\omega^m\wedge (\sqrt{-1}\p\b{\p}\phi)^n
  \end{split}
\end{align}
and
\begin{align}\label{right}
  \begin{split}
  &\quad \frac{m}{n+1}\frac{d}{d t}\omega^{m-1}\wedge (\sqrt{-1}\p\b{\p}\phi)^{n+1}\\
    &= m\omega^{m-1}\wedge (\sqrt{-1}\p\b{\p}\phi)^n\wedge (\sqrt{-1}\p\b{\p}\dot{\phi}_t)\\
    &=m\omega^{m-1}\wedge (c(\phi)+\sqrt{-1}\phi_{i\b{j}}\delta v^i\wedge \delta\b{v}^j)^n\wedge (\sqrt{-1}\p\b{\p}\dot{\phi}_t)\\
    &=(\t_{\omega}\dot{\phi}_t+tr_{\omega}c(\phi)\t_{\phi}\dot{\phi}_t)\omega^m\wedge (\sqrt{-1}\p\b{\p}\phi)^n.
  \end{split}
\end{align}
From (\ref{left}) and (\ref{right}), one has
\begin{align}\label{sec5.1}
\left(\frac{\p}{\p t}-\t_{\omega}\right)\dot{\phi}_t=0.
\end{align}
From Proposition \ref{max}, one has
 \begin{align}\label{1.3}
 	|\dot{\phi}_t|<C_1
 \end{align}
 for some constant $C_1>0$. It follows that
\begin{align}
  |tr_{\omega}c(\phi)|=|\dot{\phi}_t+\lambda|\leq |\dot{\phi}_t|+|\lambda|\leq C_1+|\lambda|=:C.
\end{align}
By (\ref{1.3}), one has
\begin{align}
|\phi(t)-\phi_0|=\left|\int^t_0 \dot{\phi}_t dt\right|\leq \int^t_0|\dot{\phi}_t|dt\leq C_1 t\leq Ct.
\end{align}
\end{proof}

\begin{rem}\label{rem1}
If the total space $\mc{X}$ is a projective bundle and $L$ is a hyperplane line bundle over $\mc{X}$, then this flow (\ref{flow}) has been studied in \cite{Wan}. More precisely, let $E\to M$ be a holomorphic vector bundle over $M$, $\mc{X}:=P(E)$ denotes the projective bundle of $E$, and $L=\mc{O}_{P(E)}(1)$ the hyperplane line bundle.
Let $h$ be a Hermitian metric on $E$, then it induces a  metric on the line bundle $L=\mc{O}_{P(E)}(1)$ by
$$\phi^i=-\log \frac{|v^i|^2}{G}$$
where $G=\sum_{i,j=1}^r h_{i\b{j}}v^i\b{v}^j$. So
$G_{i\b{j}}:=\frac{\p^2 G}{\p v^i\p\b{v}^j}=h_{i\b{j}}$
is independent of the fibers. Suppose that the initial metric $\phi_0$ is induced from a Hermitian metric $h_0$ on $E$,
then the flow (\ref{flow}) is equivalent to
       \begin{align}\label{1.6}
       \begin{split}
           0&=\frac{\p \phi}{\p t}-tr_{\omega}c(\phi)+\lambda\\
           &=\frac{1}{G}\frac{\p G}{\p t}-g^{\alpha\b{\beta}}(\phi_{\alpha\b{\beta}}-\phi^{k\b{l}}\phi_{k\b{\beta}}\phi_{\alpha\b{l}})+\lambda\\
           &=\frac{v^i\b{v}^j}{G}\left(\frac{\p G_{i\b{j}}}{\p t}-g^{\alpha\b{\beta}}((G_{i\b{j}})_{\alpha\b{\beta}}-G^{k\b{l}}G_{k\b{j}\b{\beta}}G_{\alpha i\b{l}})+\lambda G_{i\b{j}}\right).
           \end{split}
         \end{align}
      By the argument of \cite[Remark 2.1]{Wan} or by the uniqueness of solution (Proposition \ref{prop1} and \cite[Corollary 1.4]{Don1}), (\ref{1.6}) is reduced to
         \begin{align}\label{Yang-mills}
           G^{-1}\cdot \frac{\p G}{\p t}:=G^{i\b{l}}\frac{\p G_{j\b{l}}}{\p t}=-g^{\alpha\b{\beta}}F_{j\alpha\b{\beta}}^i+\lambda \delta^i_j=-\Lambda F+\lambda I,
         \end{align}
        which is exactly the Hermitian-Yang-Mills flow (cf. \cite{Atiyah, Don1}), where the curvature operator $F$ is defined by
         $$F:=\sqrt{-1}\b{\p}(\p G\cdot G^{-1})\in A^{1,1}(M, \text{End}(E)).$$
So the flow (\ref{flow}) is indeed a natural generalization of Hermitian-Yang-Mills flow.
By \cite[Proposition 20]{Don1}, the Hermitian-Yang-Mills flow (\ref{Yang-mills}) has a unique smooth solution for $0\leq t<+\infty$. Immediately, one natural and interesting problem is that whether the geodesic-Einstein flow (\ref{flow}) has a smooth solution for $0\leq t<+\infty$.
\end{rem}

\subsection{Nonlinear semistable}

In this subsection, we will assume that the geodesic-Einstein flow (\ref{flow}) has a smooth solution for $0\leq t<+\infty$ and consider nonlinear semistability of a pair $(\mc{X}, L)$ (see Definition \ref{defn1}).

Firstly, we recall the definition of nonlinear semistable.
A fibration $\mc{Y}\to M-S$, with $S$ a closed subvariety in $M$ of ${\rm codim} S\geq 2$, is called a sub-fibration of the holomorphic fibration ${\mc X}\to M$ if for any $p\in M-S$, the fiber ${\mc Y}_p$ is a closed complex  submanifold of the fiber ${\mc X}_p$. Let $\mc{F}$ be the set of sub-fibrations of the holomorphic fibration ${\mc X}\to M$.
For any $\mc{Y}\in \mathscr{F}$, we set
\begin{align}\label{l}
\lambda_{\mc{Y},L}=\frac{2\pi m}{{\dim\mc{Y}/M}+1}\frac{([\omega]^{m-1}c_1(L)^{{\dim\mc{Y}/M}+1})[\mc{Y}]}{([\omega]^mc_1(L)^{\dim\mc{Y}/M})[\mc{Y}]}.
\end{align}
Note that $\lambda_{\mc{Y},L}$ is well-defined and independent of the metrics on $L$ by the Stoke's theorem and ${\rm codim} S\geq 2$. Similar to the semistability of a holomorphic vector bundle, the nonlinear semistable of a pair $(\mc{X},L)$ is given by the following.
\begin{defn}[{\cite[Definition 2.1]{Feng}}]\label{stability}
    A pair $(\mc{X}, L)$ is called nonlinear
  semistable  if $\lambda_{\mc{Y},L}\geq \lambda_{\mc{X},L}$  for any sub-fibration $\mc{Y}\in \mathscr{F}$.
\end{defn}
Now we assume that the flow (\ref{flow}) has a smooth solution for $0\leq t<+\infty$, we obtain
\begin{prop}\label{prop2}
 Suppose that the geodesic-Einstein flow (\ref{flow}) has a smooth solution $\phi_t=\phi(t)$ for $0\leq t<+\infty$, then
  \begin{enumerate}
    \item The Donaldson type functional is monotone decreasing function of $t$; in fact,
        \begin{align}\label{fir var}
          \frac{d}{dt}\mc{L}(\phi(t),\phi_0)=-\int_{\mc{X}}(tr_{\omega}c(\phi_t)-\lambda)^2(\sqrt{-1}\p\b{\p}\phi_t)^n\wedge\frac{\omega^m}{m!}\leq 0.
        \end{align}
    \item $\max_{\mc{X}}(tr_{\omega}c(\phi)-\lambda)^2$ is a monotone decreasing function of $t$;
    \item If $\mc{L}(\phi(t),\phi_0)$ is bounded below, i.e., $\mc{L}(\phi(t),\phi_0)\geq A>-\infty$ for $0\leq t<+\infty$, then
        $$\max_{M}\int_{\mc{X}/M}(tr_{\omega}c(\phi_t)-\lambda)^2(\sqrt{-1}\p\b{\p}\phi_t)^n\to 0$$
        as $t\to +\infty$.
  \end{enumerate}
\end{prop}

\begin{proof}
  \begin{enumerate}
     \item Substituting  (\ref{flow}) into the first variation (\ref{var}) of the Donaldson type functional, one has
     \begin{align*}
     \frac{d}{d t}\mc{L}(\phi(t),\phi_0)&=-\int_{\mc{X}}\dot{\phi}_t(tr_{\omega}c(\phi_t)-\lambda)(\sqrt{-1}\p\b{\p}\phi_t)^n\wedge\frac{\omega^m}{m!}\\
     &=-\int_{\mc{X}}(tr_{\omega}c(\phi_t)-\lambda)^2(\sqrt{-1}\p\b{\p}\phi_t)^n\wedge\frac{\omega^m}{m!}\leq 0.
     \end{align*}

     \item By a direct calculation, one has
     \begin{align*}
       &\frac{1}{2}\left(\frac{\p}{\p t}-\t_{\omega}\right)(tr_{\omega}c(\phi)-\lambda)^2)=\frac{1}{2}\left(\frac{\p}{\p t}-\t_{\omega}\right)\dot{\phi}^2_t\\
       &=\dot{\phi}_t\ddot{\phi}_{tt}-\frac{1}{2}g^{\alpha\b{\beta}}(\p\b{\p}\dot{\phi}^2_t)(\frac{\delta}{\delta z^{\alpha}},\frac{\delta}{\delta\b{z}^{\beta}})\\
       &=\dot{\phi}_t\ddot{\phi}_{tt}-\dot{\phi}_t\t_{\omega}\dot{\phi}_t-|\p^H\dot{\phi}_t|^2\\
       &=\dot{\phi}_t\left(\frac{\p}{\p t}-\t_{\omega}\right)\dot{\phi}_t-|\p^H\dot{\phi}_t|^2\\
       &=-|\p^H\dot{\phi}_t|^2\leq 0,
     \end{align*}
     where the last equality holds by (\ref{sec5.1}).
     By Proposition \ref{max}, we complete the proof.

 \item Integrating (\ref{fir var}) from $0$ to $s$, we obtain
$$\mc{L}(\phi(s),\phi_0)-\mc{L}(\phi(0),\phi_0)=-\int_0^s
\int_{\mc{X}}(tr_{\omega}c(\phi_t)-\lambda)^2(\sqrt{-1}\p\b{\p}\phi_t)^n\wedge\frac{\omega^m}{m!} ds$$
Since the $\mc{L}$ is bounded below by a constant independent of $s$, we have
$$\int_0^{\infty}\int_{\mc{X}}(tr_{\omega}c(\phi_t)-\lambda)^2(\sqrt{-1}\p\b{\p}\phi_t)^n\wedge\frac{\omega^m}{m!} ds<\infty.$$
In particular,
\begin{align}
  \int_{\mc{X}}(tr_{\omega}c(\phi_t)-\lambda)^2(\sqrt{-1}\p\b{\p}\phi_t)^n\wedge\frac{\omega^m}{m!}\to 0
  \quad t\to\infty.
\end{align}
Let $H(z,w,t)$ be the heat kernel for $\p_t-\t_{\omega}$ when acting on $C^{\infty}(M)$. Set
$$F(z,t)=\left(\int_{\mc{X}/M}(tr_{\omega}c(\phi_t)-\lambda)^2(\sqrt{-1}\p\b{\p}\phi_t)^n\right)(z),\quad (z,t)\in M\times [0,\infty).$$
Fix $t_0\in [0,\infty)$ and set
 $$u(z,t)=\int_M H(z,w,t-t_0)F(w,t_0)dw, \quad dw=\frac{\omega^m}{m!}.$$
Then $u(z,t)$ is of class $C^{\infty}$ on $M\times (t_0,\infty)$ and extends to a continuous function on $M\times [t_0,\infty)$. It satisfies
\begin{equation*}
  \begin{cases}
    &(\p_t-\t_{\omega})u(z,t)=0\quad (x,t)\in M\times (t_0,\infty),\\
    &u(z,t_0)=F(z,t_0) \quad z\in M.
  \end{cases}
\end{equation*}
And we have
$$\frac{\p F(z,t)}{\p t}=\pi_*\left((\frac{\p}{\p t}(tr_{\omega}c(\phi_t)-\lambda)^2+\t_{\phi}\dot{\phi}(tr_{\omega}c(\phi_t)-\lambda)^2)(\sqrt{-1}\p\b{\p}\phi)^n\right)$$
and
$$\t_{\omega}F(z,t)=\pi_*((\t_{\omega}(tr_{\omega}c(\phi_t)-\lambda)^2+\t_{\phi}(tr_{\omega}c(\phi_t)-\lambda)^2tr_{\omega}c(\phi))(\sqrt{-1}\p\b{\p}\phi)^n).$$
  By (\ref{sec5.1}) and Stoke's Theorem, one has
\begin{align}
  \begin{split}
    &\left (\frac{\p}{\p t}-\t_{\omega}\right)F(z,t)\\
    &=\pi_*((-2|\p^H\dot{\phi}_t|^2+(\t_{\phi}\dot{\phi}_t)\dot{\phi}^2_t-(\t_{\phi}\dot{\phi}^2_t)(\dot{\phi}+\lambda))(\sqrt{-1}\p\b{\p}\phi)^n)\\
    &=\pi_*((-2|\p^H\dot{\phi}_t|^2-\frac{1}{3}\t_{\phi}(\dot{\phi}_t^3+3\lambda\dot{\phi}_t))(\sqrt{-1}\p\b{\p}\phi)^n)\\
    &=\pi_*(-2|\p^H\dot{\phi}_t|^2(\sqrt{-1}\p\b{\p}\phi)^n)\leq 0.
  \end{split}
\end{align}
   Thus
   $$(\p_t-\t_{\omega})(F(z,t)-u(z,t))\leq 0,\quad (z,t)\in M\times (t_0,\infty).$$
   By Proposition \ref{max}, one has
   $$\max_{z\in M}(F(z,t)-u(z,t))\leq \max_{z\in M}(F(z,t_0)-u(z,t_0))=0,\quad t\geq t_0.$$
   It follows that
   \begin{align*}
     \max_{z\in M}F(z,t_0+a)&\leq \max_{z\in M}u(z,t_0+a)\\
     &=\max_{z\in M}\int_M H(z,w,a)F(w,t_0)dw\\
     &\leq C_a\int_M F(w,t_0)dw,
   \end{align*}
   where $C_a=\max_{M\times M}H(z,w,a)$. Fix $a$ and let $t_0\to \infty$, we conclude
   $$\max_{z\in M}F(z,t)\to 0\quad t\to\infty,$$
   which competes the proof.
   \end{enumerate}

\end{proof}

Inspired by  Proposition \ref{prop2} (3), we give the following definition of approximate geodesic-Einstein structure, which is similar as the approximate Hermitian-Einstein structure for a holomorphic vector bundle (see e.g. \cite[Section 4.5]{Ko3}).
\begin{defn}\label{defn1}
  We say that $L$ admits an approximate geodesic-Einstein structure, if for any given $\epsilon>0$, there exists an metric $\phi_{\epsilon}$ on $L$ such that
  $$\max_{ M}\left|\int_{\mc{X}/M}(tr_{\omega}c(\phi_{\epsilon})-\lambda)^2(\sqrt{-1}\p\b{\p}\phi_{\epsilon})^n\right|^{\frac{1}{2}}< \epsilon.$$
\end{defn}
By Proposition \ref{prop2} (3) and above definition, we obtain the following theorem.
\begin{thm}\label{thm10}
  Suppose that the geodesic-Einstein flow has a smooth solution for $0\leq t<+\infty$, then we have implications $(1)\Rightarrow (2)\Rightarrow (3)$ for the following statements:
  \begin{itemize}
  \item[(1)] the  functional $\mc{L}$ is bounded from below;
  \item[(2)]  $L$ admits an approximate geodesic-Einstein structure;
  \item[(3)] the pair $(\mc{X},L)$ is nonlinear semistable.
  \end{itemize}
\end{thm}
\begin{proof}
  By Proposition \ref{prop2} (3) and Definition \ref{defn1}, then $\mc{L}$ admits an approximate geodesic-Einstein structure. Now we begin to prove that a pair $(\mc{X}, L)$ is nonlinear semistable if $L$ admits an approximate geodesic-Einstein structure. In fact,
 \begin{align*}
   \lambda_{\mc{Y},L}
   &\geq \frac{(tr_{\omega}c(\phi)_{\mc{X}}[\omega]^mc_1(L)^{n'})[\mc{Y}]}{([\omega]^mc_1(L)^{n'})[\mc{Y}]}\\
   &=\lambda_{\mc{X},L}+\frac{((tr_{\omega}c(\phi)_{\mc{X}}-\lambda_{\mc{X},L})[\omega]^mc_1(L)^{n'})[\mc{Y}]}{([\omega]^mc_1(L)^{n'})[\mc{Y}]}\\
   &\geq \lambda_{\mc{X},L}-\frac{\epsilon}{(c_1(L)^{n'}[\mc{Y}/M])^{1/2}},
 \end{align*}
 where the last inequality holds by
 \begin{align*}
   &\quad \left| \int_{\mc{Y}/M}(tr_{\omega}c(\phi)_{\mc{X}}-\lambda_{\mc{X},L})c_1(L)^{n'}\right|\\
   &\leq \left| \int_{\mc{Y}/M}(tr_{\omega}c(\phi)_{\mc{X}}-\lambda_{\mc{X},L})^2c_1(L)^{n'}\right|^{1/2}(c_1(L)^{n'}[\mc{Y}/M])^{1/2}\\
   &\leq \left| \int_{\mc{X}/M}(tr_{\omega}c(\phi)_{\mc{X}}-\lambda_{\mc{X},L})^2c_1(L)^{n}\right|^{1/2}(c_1(L)^{n'}[\mc{Y}/M])^{1/2}\\
   &<\epsilon(c_1(L)^{n'}[\mc{Y}/M])^{1/2}.
 \end{align*}
 By taking $\epsilon\to 0$, we have $\lambda_{\mc{Y},L}\geq \lambda_{\mc{X},L}$, which completes the proof by Definition \ref{defn1}.
\end{proof}
\begin{rem}
	For the case of holomorphic vector bundle over compact K\"ahler manifold $M$,
	 Proposition \ref{prop2} and Theorem \ref{thm10} were proved in \cite[Proposition 6.9.1, Theorem 6.10.13]{Ko3}. In particular, if $M$ is projective, then $(1), (2), (3)$  are equivalent (see \cite[Theorem 6.10.13]{Ko3}), and he also conjectured that they should be equivalent in general whether $M$ is algebraic or not. Later, $(3)\Rightarrow (2)$ was proved in \cite{Jacob, LZ} if $M$ is K\"ahler. For a general pair ($\mc{X}$, $L$), one may ask that whether $(3)\Rightarrow (1)$ if $M$ is projective, and whether $(3)\Rightarrow(2)$ if $M$ is K\"ahler.
\end{rem}

\section{The case of $tr_{\omega}c(\phi)\geq 0$}\label{sec3}

In this section, we assume that there exists a metric $\phi$ on $L$ such that
\begin{align}
	tr_{\omega}c(\phi)\geq 0.
\end{align}
  For any given $\phi\in F^+(L)$ and the natural frame $\{\frac{\p}{\p z^{\alpha}},1\leq\alpha\leq m\}$ of $TM$, one sees that there is  canonical liftings $\{\frac{\delta}{\delta z^{\alpha}}, 1\leq \alpha\leq m\}$. Thus for  any vector $X=X^{\alpha}\frac{\p}{\p z^{\alpha}}|_y\in T_yM$ at a point $y\in M$, there is a canonical lifting
$$\wt{X}=X^{\alpha}\frac{\delta}{\delta z^{\alpha}},$$
which is a vector field on $\mc{X}_y=\pi^{-1}(y)$. We call the canonical lifting $\wt{X}$ is holomorphic if
\begin{align}
\b{\p}^V\wt{X}=\b{\p}^V	\left(X^{\alpha}\frac{\delta}{\delta z^{\alpha}}\right)=X^{\alpha}\b{\p}^V	\left(\frac{\delta}{\delta z^{\alpha}}\right)=X^{\alpha}\frac{\p}{\p v^i}(-\phi_{\alpha\b{l}}\phi^{\b{l}k})\delta v^i=0.
\end{align}

For any holomorphic vector bundle $E$ over $M$, the degree of $E$ is denoted
\begin{align}
\deg_{\omega}E=\int_M c_1(E)\wedge [\omega]^{m-1}.	
\end{align}

By using Berndtsson's curvature formula of direct image bundle, we obtain
\begin{thm}\label{thm1}
	If there exists a metric $\phi\in F^+(L)$ such that $tr_{\omega}c(\phi)\geq 0$, then $\deg_{\omega} \pi_*(L+K_{\mc{X}/M})=0$ if and only if $\lambda_{\mc{X},L}=0$  and the canonical lifting of any vector  is holomorphic. In particular, $\phi$ is a geodesic-Einstein metric on $L$.
\end{thm}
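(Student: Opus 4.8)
The plan is to compute $\deg_\omega\pi_*(L+K_{\mc{X}/M})$ directly from Berndtsson's curvature formula (Theorem \ref{BF1}) and to display it as a sum of two manifestly non-negative quantities, each of whose vanishing matches one of the two conditions on the right-hand side. Writing $E=\pi_*(L+K_{\mc{X}/M})$ with the $L^2$-metric (\ref{L2 metric}), $\deg_\omega E$ is a positive multiple of $\int_M\Lambda_\omega\,\mathrm{Tr}(\sqrt{-1}\Theta^E)\,\omega^m$. Fixing $y\in M$ and an $L^2$-orthonormal basis $\{u_k\}$ of $E_y$, tracing Berndtsson's formula over this basis and applying $\Lambda_\omega=g^{\alpha\b{\beta}}$ to the $(\alpha,\b\beta)$-components gives
\begin{align*}
\Lambda_\omega\,\mathrm{Tr}(\sqrt{-1}\Theta^E)=\int_{\mc{X}_y} tr_\omega c(\phi)\,B_y+\sum_k g^{\alpha\b{\beta}}\langle(1+\Delta')^{-1}i_{\mu_\alpha}u_k,\,i_{\mu_\beta}u_k\rangle,
\end{align*}
where $B_y=\sum_k|u_k|^2e^{-\phi}$ is the Bergman density of $(L+K_{\mc{X}/M})|_{\mc{X}_y}$ and $g^{\alpha\b{\beta}}c(\phi)_{\alpha\b{\beta}}=tr_\omega c(\phi)$.

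First I would check that both terms are non-negative. The first integrand is the product of $tr_\omega c(\phi)\geq 0$ (the hypothesis) with the non-negative density $B_y$. For the second, since $\Delta'\geq 0$ the operator $(1+\Delta')^{-1}$ is positive definite, and diagonalizing the positive definite matrix $g^{\alpha\b{\beta}}$ shows the sum is $\geq 0$, with equality precisely when $i_{\mu_\alpha}u_k=0$ for all $\alpha$ and all $k$. Integrating over $M$ then yields $\deg_\omega E\geq 0$, and more importantly $\deg_\omega E=0$ if and only if both terms vanish identically over $M$.

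Next I would identify the two vanishing conditions. For the second term, vanishing means $i_{\mu_\alpha}u=0$ for every section $u$; contracting $\mu_{\alpha}=-\frac{\p}{\p \b{v}^l}(\phi_{\alpha\b{j}}\phi^{\b{j}i})\,d\b{v}^l\otimes\frac{\p}{\p v^i}$ against the fiberwise top-degree part of $u$ forces $\mu_\alpha=0$, i.e. $\b{\p}^V(\frac{\delta}{\delta z^\alpha})=0$, which is exactly holomorphicity of the canonical lifting of every vector. For the first term I would invoke the identity (\ref{geolam}): integrating it over $\mc{X}$ expresses $\int_{\mc{X}} tr_\omega c(\phi)\,(\sqrt{-1}\p\b{\p}\phi)^n\wedge\frac{\omega^m}{m!}$ as a positive multiple of $([\omega]^{m-1}c_1(L)^{n+1})[\mc{X}]$, whose vanishing (the denominator $([\omega]^mc_1(L)^n)[\mc{X}]$ being positive) is equivalent to $\lambda_{\mc{X},L}=0$; since $(\sqrt{-1}\p\b{\p}\phi)^n\wedge\omega^m$ reduces to the strictly positive volume form $(\sqrt{-1}\phi_{i\b{j}}\delta v^i\wedge\delta\b{v}^j)^n\wedge\omega^m$ and $tr_\omega c(\phi)\geq 0$, one gets $\lambda_{\mc{X},L}=0\iff tr_\omega c(\phi)\equiv 0$. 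The implication $(\Leftarrow)$ is then immediate: $tr_\omega c(\phi)\equiv 0$ kills the first term and holomorphic liftings kill the second, so $\deg_\omega E=0$.

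For the converse $(\Rightarrow)$ the main obstacle is that vanishing of the first term only forces $tr_\omega c(\phi)=0$ on the support of $B_y$, and likewise the second only forces $\mu_\alpha=0$ there, rather than everywhere. I would resolve this by noting that, provided $E\neq 0$ so that $H^0(\mc{X}_y,K_{\mc{X}_y}\otimes L_y)\neq 0$ for generic $y$, the common zero locus of the sections is a proper analytic subset; hence $B_y>0$ on a dense open set, and by continuity $tr_\omega c(\phi)\equiv 0$ and $\mu_\alpha\equiv 0$ on all of $\mc{X}$, giving $\lambda_{\mc{X},L}=0$ together with holomorphic liftings. Finally, under either equivalent condition we have $tr_\omega c(\phi)\equiv 0=\lambda_{\mc{X},L}$, so $\phi$ satisfies $tr_\omega c(\phi)=\lambda_{\mc{X},L}$ and is geodesic-Einstein, which is the last assertion.
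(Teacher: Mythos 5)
Your proof is correct and follows essentially the same route as the paper's: trace Berndtsson's formula (Theorem \ref{BF1}) with respect to $\omega$, note that both resulting terms are non-negative so that $\deg_{\omega}\pi_*(L+K_{\mc{X}/M})\geq 0$, and identify the vanishing of the two terms with $tr_{\omega}c(\phi)\equiv 0$ (equivalently $\lambda_{\mc{X},L}=0$, via (\ref{1.19})) and $\mu_{\alpha}\equiv 0$ (holomorphicity of canonical liftings), respectively. Your extra care at the end --- observing that vanishing of the integrals only gives $tr_{\omega}c(\phi)=0$ and $i_{\mu_{\alpha}}u=0$ on the support of the Bergman density, and closing this by positivity of that density off a proper analytic subset (under the caveat $E\neq 0$) plus continuity --- makes explicit a point the paper's proof passes over silently.
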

\begin{proof}
We first prove the last argument. If $tr_{\omega}c(\phi)\geq 0$, by the definition of $\lambda_{\mc{X},L}$ (see (\ref{lamda})), then
\begin{align}\label{1.19}
	     \lambda_{\mc{X},L}=\frac{2\pi m}{n+1}\frac{([\omega]^{m-1}c_1(L)^{n+1})[\mc{X}]}{([\omega]^mc_1(L)^n)[\mc{X}]}=\frac{\int_{\mc{X}/M}tr_{\omega}c(\phi)\omega^n_F}{\int_{\mc{X}/M}\omega^n_F}\geq 0,
\end{align}
the equality holds if and only if $tr_{\omega}c(\phi)=0$. Thus $\phi$ is a geodesic-Einstein metric on $L$ if $tr_{\omega}c(\phi)\geq 0$ and  $\lambda_{\mc{X},L}=0$.

Denote $E=\pi_*(L+K_{\mc{X}/M})$. By \cite[Theorem 1.2]{Bern4} and taking trace with respect to $\omega$, one has
\begin{align}\label{1.17}
\langle tr_{\omega}\Theta^{E}u,u\rangle=\int_{\mc{X}_y}tr_{\omega}c(\phi)|u|^2e^{-\phi}+g^{\alpha\b{\beta}}\langle(1+\Delta')^{-1}i_{\b{\p}^V(\frac{\delta}{\delta z^{\alpha}})}u,i_{\b{\p}^V(\frac{\delta}{\delta z^{\beta}})}u\rangle\sqrt{-1}dz^{\alpha}\wedge d\b{z}^{\beta}.
\end{align}
Combining the assumption $tr_{\omega}c(\phi)\geq 0$ shows that
\begin{align}\label{1.18}
\langle tr_{\omega}\Theta^{E}u,u\rangle\geq 0.	
\end{align}
By taking trace to $tr_{\omega}\Theta^E$ and using (\ref{1.18}), one gets
\begin{align}
	tr_E(tr_{\omega}\Theta^E)\geq 0.
\end{align}
Therefore,
\begin{align}
\begin{split}
\deg_{\omega}E &=\int_M c_1(E)\wedge \omega^{m-1}
=\frac{1}{m}\int_M tr_{\omega}c_1(E)\omega^m\\
&=\frac{1}{m}\int_M tr_{\omega}tr_E\Theta^E\omega^m
=\frac{1}{m}\int_M tr_Etr_{\omega}\Theta^E\omega^m\geq 0.
\end{split}
\end{align}
Moreover, by (\ref{1.19}), (\ref{1.17}) and (\ref{1.18}), $\deg_{\omega}E=0$ if and only if $tr_{\omega}c(\phi)=0$ and $\b{\p}^V(\frac{\delta}{\delta z^{\alpha}})=0$, which is equivalent to
\begin{align}
	 \lambda_{\mc{X},L}=\frac{\int_{\mc{X}/M}tr_{\omega}c(\phi)\omega^n_F}{\int_{\mc{X}/M}\omega^n_F}=0
\end{align}
and for any canonical lifting $\tilde{X}$, one has
\begin{align}
\b{\p}^V\wt{X}=X^{\alpha}\b{\p}^V	\left(\frac{\delta}{\delta z^{\alpha}}\right)=0,
\end{align}
i.e., $\wt{X}$ is holomorphic, which completes the proof.
\end{proof}

\begin{rem}
\begin{itemize}
\item[(1)] The above theorem is inspired by \cite[Theorem 2.4]{Bern3} for the case of semipositive line bundle $L$ over a local holomorphic fibration.
\item[(2)]	Theorem \ref{thm1} gives a sufficient condition for the existence of geodesic-Einstein metric for the case $\lambda_{\mc{X},L}=0$. For a geodesic-Einstein equation $tr_{\omega}c(\phi)=\lambda_{\mc{X},L}$
	with $\lambda_{\mc{X},L}\neq 0$, we can reduce it to the case of $tr_{\omega}c(\phi)=0$.
In fact, for any line bundle $L'\to M$ with $\deg_{\omega}L'\neq 0$, there exists a Hermitian-Einstein metric $\phi'$ on $M$ (see \cite[Proposition 4.1.4 and Proposition 4.2.4]{Ko3} or \cite[Chapter 1, (1.4) Remark (i)]{Siu}), i.e. $tr_{\omega}c_1(L',\phi')=\lambda'\neq 0$. Since both $\lambda_{\mc{X},L}$ and $\lambda'$ are rational, so there exist integers $a>0,b$ such that $a\lambda_{\mc{X},L}+b\lambda'=0$, thus
$$tr_{\omega}c(\psi)=a\lambda_{\mc{X},L}+b\lambda'=0.$$  	
	Here $\psi=a\phi+b\pi^*\phi'$ is the weight of the line bundle $aL+b\pi^*L'$.
\item[(3)] By a direct calculation, a homogenous geodesic-Einstein equation $tr_{\omega}c(\phi)=0$ is equivalent to
\begin{align}\label{2.2}
(\sqrt{-1}\p\b{\p}\phi)^{n+1}\wedge \omega^{m-1}=0.	
\end{align}
Moreover,  if one considers the homogenous geodesic curvature equation $c(\phi)=0$, by Lemma \ref{lemma0}, it is equivalent to
\begin{align}\label{2.3}
	(\sqrt{-1}\p\b{\p}\phi)^{n+1}=0.
\end{align}
In our next paper, we will try to study the above two equations (\ref{2.2}) and (\ref{2.3}).
	\end{itemize}
	
\end{rem}

\section{$S$-class and $C$-class}\label{sec4}

In this section, we will define the total $S$-class $S(L)$ and the total $C$-class $C(L)$ for a relative ample line bundle $L$ and discuss some inequalities and positivity.

Let $\pi:E\to M$ be a holomorphic vector bundle of rank $r$ over $M$,  then there is a canonical pair $(P(E), \mc{O}_{P(E)}(1))$.  The Segre classes are defined by
\begin{align}
s_i(E)=\pi_*((c_1(\mc{O}_{P(E)}(1)))^{r-1+i})\in H^{2i}(M,\mb{Z}),\quad 0\leq i\leq m=\dim M.	
\end{align}
Then the total Segre class is given by
\begin{align}
s(E)=\sum_{i=0}^{m}	s_i(E).
\end{align}
The total Chern class and Chern classes of $E$ can be defined by
\begin{align}
c(E)=\frac{1}{s(E)}, \quad c(E)=\sum_{i=0}^m c_i(E),\quad c_i(E)\in H^{2i}(M,\mb{Z}),	
\end{align}
(see e.g. \cite[Chapter 3]{Fulton} or \cite[Section 20]{Bott}).

Inspired by the above construction, for a general pair $(\mc{X},L)$, $\pi:\mc{X}\to M$, we define the total $S$-class and the $S$-classes of $L$ by
\begin{align}
 S(L)=\sum_{i=0}^m S_i(L),\quad S_i(L)=\pi_{*}((c_1(L))^{n+i})\in H^{2i}(M,\mb{Z}), \quad 0\leq i\leq m,
\end{align}
and the total $C$-class and the $C$-classes are defined by
\begin{align}\label{2.1}
C(L)=\frac{1}{S(L)},\quad 	C(L)=\sum_{i=0}^m C_i(L),\quad C_i(L)\in H^{2i}(M,\mb{Q}).
\end{align}
By above definition, one has
\begin{align}\label{1.10}
C_0(L)=\frac{1}{S_0(L)},\quad C_1(L)=-\frac{S_1(L)}{S_0(L)^2},\quad C_2(L)=\frac{S_1(L)^2-S_0(L)S_2(L)}{S_0(L)^3},
\end{align}
where $S_0(L)=\int_{\mc{X}/M}c_1(L)^n\in\mb{N}_+$.


\subsection{Some inequalities}
In this subsection, we assume that there exists a geodesic-Einstein metric on $L$ and discuss some inequalities in terms of $S$-class and $C$-class.

For any smooth metric $\phi$ on $L$, it induces a natural representation of $S_i(L)$ by
\begin{align}\label{S form}
S_i(L,\phi)=\pi_*((c_1(L,\phi))^{n+i})=\int_{\mc{X}/M}\left(\frac{\sqrt{-1}}{2\pi}\p\b{\p}\phi	\right)^{n+i}
\end{align}
 and set $S(L,\phi)=\sum_{i=0}^m S_i(L,\phi)$. By the relation (\ref{2.1}), we obtain the representations of $C(L)$, $C_i(L)$ by
 \begin{align}
 C(L,\phi)=\frac{1}{S(L,\phi)}=\sum_{i=1}^m C_{i}(L,\phi).	
 \end{align}
 We call $S_i(L, \phi)$ and $C_i(L,\phi)$ the $S$-form and $C$-form, respectively.
If $L$ admits a geodesic-Einstein metric, then we obtain
\begin{thm}\label{thm3}
  If $\phi$ is a geodesic-Einstein metric on $L$, i.e., $tr_{\omega}c(\phi)=\lambda_{\mc{X},L}$, then
  \begin{align}\label{1.7}
    S_2(L,\phi)\wedge \omega^{m-2}\leq \frac{(n+1)(n+2)}{8\pi^2 m^2}\lambda^2_{\mc{X},L}S_0(L)\omega^m,
  \end{align}
  the equality holds if and only if $c(\phi)=\frac{\lambda_{\mc{X},L}}{m}\omega$. In particular,
  \begin{align}\label{1.8}
  \int_{M} S_2(L)\wedge [\omega]^{m-2}\leq 	\frac{(n+1)(n+2)}{8\pi^2 m^2}\lambda^2_{\mc{X},L}S_0(L)\int_{M}[\omega]^m.
  \end{align}

\end{thm}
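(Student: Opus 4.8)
The plan is to reduce the stated inequality between $(m,m)$-forms on $M$ to a pointwise linear-algebra estimate for the horizontal form $c(\phi)$, performed fiberwise and then integrated along the fibers. First I would invoke the decomposition of Lemma \ref{lemma0}, writing $\sqrt{-1}\p\b{\p}\phi=c(\phi)+\rho$ with $\rho:=\sqrt{-1}\phi_{i\b{j}}\delta v^i\wedge\delta\b{v}^j$ the vertical part, which is a positive $(n,n)$-factor in the sense that $\rho^n>0$ along each fiber because $\phi\in F^+(L)$. Since $c(\phi)$ is purely horizontal and $\rho$ purely vertical, expanding $\left(\tfrac{\sqrt{-1}}{2\pi}\p\b{\p}\phi\right)^{n+2}$ by the binomial theorem and keeping only the summand whose vertical degree equals $(n,n)$ — the only one that survives the fiber integration, since $\rho^{n+1}=0$ kills the lower powers of $c(\phi)$ while higher powers of $c(\phi)$ leave vertical degree below $(n,n)$ — collapses the sum to the single term with $c(\phi)^2\wedge\rho^n$:
\[
S_2(L,\phi)=\int_{\mc{X}/M}\left(\tfrac{\sqrt{-1}}{2\pi}\p\b{\p}\phi\right)^{n+2}=\frac{1}{(2\pi)^{n+2}}\binom{n+2}{2}\int_{\mc{X}/M}c(\phi)^2\wedge\rho^n.
\]

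Next I would wedge with $\omega^{m-2}$ and move it inside the fiber integral by the projection formula, so that $S_2(L,\phi)\wedge\omega^{m-2}=\tfrac{(n+1)(n+2)}{2(2\pi)^{n+2}}\int_{\mc{X}/M}\bigl(c(\phi)^2\wedge\omega^{m-2}\bigr)\wedge\rho^n$. The heart of the argument is then the pointwise inequality for the real horizontal $(1,1)$-form $c(\phi)$: diagonalizing it with respect to $\omega$ with eigenvalues $\mu_1,\dots,\mu_m$, a direct computation gives $c(\phi)^2\wedge\omega^{m-2}=\tfrac{1}{m(m-1)}\bigl((tr_{\omega}c(\phi))^2-\sum_\alpha\mu_\alpha^2\bigr)\omega^m$, whence the Cauchy–Schwarz estimate $\sum_\alpha\mu_\alpha^2\geq\tfrac1m(\sum_\alpha\mu_\alpha)^2$ yields
\[
c(\phi)^2\wedge\omega^{m-2}\leq\frac{1}{m^2}\bigl(tr_{\omega}c(\phi)\bigr)^2\,\omega^m,
\]
with equality precisely when all $\mu_\alpha$ coincide, i.e. $c(\phi)=\tfrac{tr_{\omega}c(\phi)}{m}\omega$.

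Finally I would insert the geodesic-Einstein hypothesis $tr_{\omega}c(\phi)=\lambda_{\mc{X},L}$, wedge the pointwise inequality with the positive vertical form $\rho^n$ (which preserves the inequality of top forms on $\mc{X}$, since $\omega^m\wedge\rho^n>0$), and integrate along the fibers using $\int_{\mc{X}/M}\rho^n=\int_{\mc{X}/M}(\sqrt{-1}\p\b{\p}\phi)^n=(2\pi)^nS_0(L)$. Collecting the constants $\tfrac{(n+1)(n+2)}{2(2\pi)^{n+2}}\cdot\tfrac{(2\pi)^n}{m^2}=\tfrac{(n+1)(n+2)}{8\pi^2m^2}$ produces (\ref{1.7}), and its equality condition descends verbatim from that of the pointwise estimate. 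The integrated inequality (\ref{1.8}) then follows by integrating (\ref{1.7}) over $M$ and replacing $S_2(L,\phi)$ and $\omega$ by the classes $S_2(L)$ and $[\omega]$, which is legitimate because $\int_M S_2(L,\phi)\wedge\omega^{m-2}$ depends only on the cohomology classes.

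I expect the main obstacle to be the bookkeeping in the first two steps: justifying cleanly that $\pi_*$ selects exactly the term $\binom{n+2}{2}c(\phi)^2\wedge\rho^n$, and that wedging the fiberwise inequality by $\rho^n$ before integrating is valid as an inequality of top-degree forms. The accompanying pointwise computation of $c(\phi)^2\wedge\omega^{m-2}$ in $\omega$-orthonormal coordinates, together with the characterization of the equality case, is routine but must be carried out carefully to track the constant $\tfrac{1}{m^2}$ and to identify the rigidity condition $c(\phi)=\tfrac{\lambda_{\mc{X},L}}{m}\omega$.
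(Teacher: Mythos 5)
Your proposal is correct and follows essentially the same route as the paper: the decomposition of Lemma \ref{lemma0} with fiber integration collapsing $(\sqrt{-1}\p\b{\p}\phi)^{n+2}$ to the single term $\binom{n+2}{2}c(\phi)^2\wedge\omega_F^n$, the wedge identity $m(m-1)\,\alpha^2\wedge\omega^{m-2}=\bigl((tr_\omega\alpha)^2-|\alpha|^2_\omega\bigr)\omega^m$ (which you verify by diagonalization where the paper cites \cite[Lemma 4.7]{Sz}), the Cauchy--Schwarz bound $|c(\phi)|^2_\omega\geq\frac1m(tr_\omega c(\phi))^2$ with the same rigidity condition $c(\phi)=\frac{\lambda_{\mc{X},L}}{m}\omega$, and integration over $M$ for (\ref{1.8}). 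Your constant bookkeeping, including $\pi_*\rho^n=(2\pi)^nS_0(L)$, checks out and reproduces $\frac{(n+1)(n+2)}{8\pi^2m^2}$ exactly.
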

\begin{proof}
   By Lemma \ref{lemma0}, the first Chern class of $L$ is represented by
  $$\frac{\sqrt{-1}}{2\pi}\p\b{\p}\phi=\frac{1}{2\pi}c(\phi)
  +\frac{\sqrt{-1}}{2\pi}\phi_{i\b{j}}\delta v^i\wedge \delta \b{v}^j.$$
  Denote $\omega_{F}:=\frac{\sqrt{-1}}{2\pi}\phi_{i\b{j}}\delta v^i\wedge \delta \b{v}^j$ and by (\ref{S form}), then
  \begin{align}\label{1.9}
  \begin{split}
    S_2(L,\phi)\wedge \omega^{m-2}&=\pi_*\left(\frac{\sqrt{-1}}{2\pi}\p\b{\p}\phi\right)^{n+2}\wedge \omega^{m-2}\\
    &=\frac{(n+1)(n+2)}{8\pi^2}\pi_*(c(\phi)^2\omega^n_F)\wedge\omega^{m-2}\\
    &=\frac{(n+1)(n+2)}{8\pi^2m(m-1)}\pi_*\left(((tr_{\omega}c(\phi))^2-|c(\phi)|^2_{\omega})\omega^{n}_F\right)\wedge\omega^m\\
    &\leq \frac{(n+1)(n+2)}{8\pi^2m(m-1)}\pi_*\left(((tr_{\omega}c(\phi))^2-\frac{1}{m}(tr_{\omega}c(\phi))^2)\omega^{n}_F\right)\wedge \omega^m\\
    &=\frac{(n+1)(n+2)}{8\pi^2 m^2}\lambda^2_{\mc{X},L}S_0(L)\omega^m,
    \end{split}
  \end{align}
  where the third equality follows from the following formula,
  \begin{align}\label{1.11}
m(m-1)\alpha\wedge\alpha\wedge \omega^{m-2}=\left((tr_{\omega}\alpha)^2-|\alpha|^2_{\omega}\right)\omega^m
\end{align}
for any real $(1,1)$-form $\alpha$ (see e.g. \cite[Lemma 4.7]{Sz}).
  The fourth equality in (\ref{1.9}) holds since
  \begin{align}
    |c(\phi)|^2_{\omega}:=c(\phi)_{\alpha\b{\beta}}c(\phi)_{\gamma\b{\delta}}g^{\alpha\b{\delta}}g^{\gamma\b{\beta}}
  \end{align}
  and $|c(\phi)|^2_{\omega}\geq \frac{1}{m}(tr_{\omega}c(\phi))^2$.
  Moreover, the equality holds if and only if
  \begin{align}
    c(\phi)_{\alpha\b{\beta}}=\frac{\lambda_{\mc{X},L}}{m}g_{\alpha\b{\beta}},
  \end{align}
  that is, $c(\phi)=\frac{\lambda_{\mc{X},L}}{m}\omega$. By integrating the both sides of (\ref{1.7}), we conclude (\ref{1.8}).
\end{proof}

In terms of $C$-classes, we have the following Kobayashi-L\"ubke type inequality.
\begin{thm}\label{thm4}
  If $\phi$ is a geodesic-Einstein metric on $L$, then
  \begin{align}\label{1.15}
  (nC_1(L,\phi)^2-2(n+1)C_0(L)C_2(L,\phi))\wedge \omega^{m-2}\leq 0,
  \end{align}
  the equality holds if and only if $c(\phi)=\frac{2\pi}{(n+1)S_0(L)}S_1(L,\phi)$. In particular,
  \begin{align}\label{1.14}
  \int_M (nC_1(L)^2-2(n+1)C_0(L)C_2(L))\wedge [\omega]^{m-2}\leq 0.	
  \end{align}
  \end{thm}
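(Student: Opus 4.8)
The plan is to reduce the $C$-class inequality in Theorem~\ref{thm4} to the $S$-class inequality already established in Theorem~\ref{thm3}, using the explicit low-degree relations (\ref{1.10}) between the $C$-forms and the $S$-forms. First I would substitute the pointwise representatives $C_0(L)=1/S_0(L)$, $C_1(L,\phi)=-S_1(L,\phi)/S_0(L)^2$, and $C_2(L,\phi)=(S_1(L,\phi)^2-S_0(L)S_2(L,\phi))/S_0(L)^3$ into the expression $nC_1(L,\phi)^2-2(n+1)C_0(L)C_2(L,\phi)$. Since $S_0(L)$ is a positive integer constant, clearing the common denominator $S_0(L)^4$ turns the desired inequality into a statement purely about the $S$-forms $S_1(L,\phi)$ and $S_2(L,\phi)$, wedged with $\omega^{m-2}$.

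After this substitution I expect the bracketed quantity to collapse to something of the shape
\begin{align*}
\bigl(nC_1(L,\phi)^2-2(n+1)C_0(L)C_2(L,\phi)\bigr)\wedge\omega^{m-2}
=\frac{1}{S_0(L)^4}\Bigl((n+2)S_1(L,\phi)^2+2(n+1)S_0(L)\,S_2(L,\phi)\Bigr)\wedge\omega^{m-2},
\end{align*}
up to a sign bookkeeping that I would verify carefully. The next step is to express everything in terms of the geodesic curvature $c(\phi)$ by the same fiber-integration device used in the proof of Theorem~\ref{thm3}: writing $\frac{\sqrt{-1}}{2\pi}\p\b{\p}\phi=\frac{1}{2\pi}c(\phi)+\omega_F$ and pushing forward, one gets $S_1(L,\phi)=\frac{n+1}{2\pi}\pi_*(c(\phi)\,\omega_F^n)$ and $S_2(L,\phi)\wedge\omega^{m-2}=\frac{(n+1)(n+2)}{8\pi^2}\pi_*(c(\phi)^2\,\omega_F^n)\wedge\omega^{m-2}$, with $S_0(L)=\pi_*(\omega_F^n)$ a constant on fibers. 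The geodesic-Einstein condition $tr_\omega c(\phi)=\lambda_{\mc{X},L}$ then fixes the fiber average of $c(\phi)$.

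The key step is the pointwise linear-algebra inequality. After substituting and using (\ref{1.11}) to convert the wedge products into traces, I expect the whole thing to reduce to a Cauchy--Schwarz-type statement comparing $\pi_*(c(\phi)^2\,\omega_F^n)$ against $(\pi_*(c(\phi)\,\omega_F^n))^2/\pi_*(\omega_F^n)$. This is precisely the inequality $\int_{\mc{X}/M}(tr_\omega c(\phi))^2\,\omega_F^n\cdot\int_{\mc{X}/M}\omega_F^n\geq\bigl(\int_{\mc{X}/M}tr_\omega c(\phi)\,\omega_F^n\bigr)^2$ applied fiberwise, which is Cauchy--Schwarz with respect to the fiber volume measure $\omega_F^n$; equality forces $tr_\omega c(\phi)$ to be constant along each fiber, and combined with the trace-free part estimate it forces $c(\phi)=\frac{\lambda_{\mc{X},L}}{m}\omega=\frac{2\pi}{(n+1)S_0(L)}S_1(L,\phi)$. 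I would finish by integrating (\ref{1.15}) over $M$ against $\omega^{m-2}$, using that $\int_M$ of the wedge of representatives computes the cup product pairing in cohomology, to obtain (\ref{1.14}).

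The main obstacle I anticipate is the careful sign and combinatorial bookkeeping when combining the two contributions $nC_1^2$ and $-2(n+1)C_0C_2$: one must confirm that the $S_1^2$ terms assemble with the correct coefficient so that the Cauchy--Schwarz inequality points in the claimed direction rather than the reverse, and that the equality case matches the stated $c(\phi)=\frac{2\pi}{(n+1)S_0(L)}S_1(L,\phi)$. A secondary subtlety is that $S_1(L,\phi)$ is a $(1,1)$-form rather than a scalar, so $S_1(L,\phi)^2$ means a wedge square and the comparison with $S_2(L,\phi)$ must again be mediated by the trace identity (\ref{1.11}) before Cauchy--Schwarz can be invoked pointwise.
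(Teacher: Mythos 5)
Your overall route is the paper's own---substitute the relations (\ref{1.10}), clear the constant denominator $S_0(L)^4$, convert wedge squares to traces via (\ref{1.11}), fiber-integrate against $\omega_F^n$, and close with Cauchy--Schwarz---but the key step as you articulate it would fail. First, the sign you flagged is not mere bookkeeping: the correct collapse is $nC_1^2-2(n+1)C_0C_2=\frac{1}{S_0(L)^4}\bigl(2(n+1)S_0(L)S_2-(n+2)S_1^2\bigr)$, with a \emph{minus} on the $S_1^2$ term, and this relative sign is exactly what makes the argument work: since $tr_{\omega}S_1(L,\phi)$ is the constant $\frac{n+1}{2\pi}\lambda_{\mc{X},L}S_0(L)$ under the geodesic-Einstein hypothesis, the $\lambda^2_{\mc{X},L}S_0(L)^2$-contributions coming from (\ref{1.11}) applied to $2(n+1)S_0(L)S_2\wedge\omega^{m-2}$ and to $(n+2)S_1^2\wedge\omega^{m-2}$ cancel identically, and what survives is a positive multiple of $\bigl(|\pi_*(c(\phi)\omega_F^n)|^2_{\omega}-S_0(L)\,\pi_*(|c(\phi)|^2_{\omega}\,\omega_F^n)\bigr)\omega^m$. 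Second, the scalar Cauchy--Schwarz you invoke, comparing $\int_{\mc{X}/M}(tr_{\omega}c(\phi))^2\omega_F^n$ with $\bigl(\int_{\mc{X}/M}tr_{\omega}c(\phi)\,\omega_F^n\bigr)^2$, is vacuous here: $tr_{\omega}c(\phi)=\lambda_{\mc{X},L}$ is constant by hypothesis, so both sides agree identically and that inequality yields nothing. What is actually needed---and what the paper uses---is the componentwise (matrix-valued) Cauchy--Schwarz: at a point of $M$ in normal coordinates, $|\pi_*(c(\phi)_{\alpha\b{\beta}}\,\omega_F^n)|^2\leq \pi_*(\omega_F^n)\,\pi_*(|c(\phi)_{\alpha\b{\beta}}|^2\omega_F^n)$ for each pair $(\alpha,\beta)$. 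It is the fiberwise oscillation of the full matrix $c(\phi)_{\alpha\b{\beta}}$, not of its trace, that the inequality controls.

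Your equality analysis conflates this theorem with Theorem \ref{thm3}. You write that equality forces $c(\phi)=\frac{\lambda_{\mc{X},L}}{m}\omega=\frac{2\pi}{(n+1)S_0(L)}S_1(L,\phi)$, but $c(\phi)=\frac{\lambda_{\mc{X},L}}{m}\omega$ is the equality condition of Theorem \ref{thm3} and is strictly stronger than what holds here: equality in (\ref{1.15}) holds if and only if each $c(\phi)_{\alpha\b{\beta}}$ is constant along the fibers, i.e. $c(\phi)$ equals its fiber average $\frac{2\pi}{(n+1)S_0(L)}S_1(L,\phi)$, with no proportionality of $S_1(L,\phi)$ to $\omega$ forced. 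Likewise, ``equality forces $tr_{\omega}c(\phi)$ to be constant along each fiber'' is empty, since that trace is globally constant by hypothesis. Your final step---integrating (\ref{1.15}) over $M$ to obtain (\ref{1.14})---is fine and agrees with the paper.
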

  \begin{proof}
  By (\ref{1.10}) and (\ref{1.11}), one has
  \begin{align}\label{1.12}
  	\begin{split}
  		C_1(L,\phi)^2\wedge \omega^{m-2}&=\frac{1}{S_0(L)^4}S_1(L,\phi)^2\wedge \omega^{m-2}\\
  		&=\frac{1}{S_0(L)^4}\frac{1}{m(m-1)}\left((tr_{\omega}S_1(L,\phi))^2-|S_1(L,\phi)|^2_{\omega}\right)\omega^m\\
  		&=\frac{1}{S_0(L)^4}\frac{1}{m(m-1)}\left(((n+1)\lambda_{\mc{X},L}S_0(L))^2-|S_1(L,\phi)|^2_{\omega}\right)\omega^m,
  	\end{split}
  \end{align}
and
\begin{align}\label{1.13}
\begin{split}
	C_2(L,\phi)\wedge \omega^{m-2}&=\frac{S_1(L,\phi)^2-S_0(L)S_2(L,\phi)}{S_0(L)^3}\wedge \omega^{m-2}\\
	&=S_0(L)C_1(L,\phi)^2\wedge \omega^{m-2}-\frac{1}{S_0(L)^2}S_2(L,\phi)\wedge \omega^{m-2}\\
	&=S_0(L)C_1(L,\phi)^2\wedge \omega^{m-2}-\frac{1}{S_0(L)^2}\frac{(n+1)(n+2)}{8\pi^2m(m-1)}(\lambda^2_{\mc{X},L}S_0(L)-\pi_*(|c(\phi)|^2_{\omega}\omega^n_{F}))\omega^m.
\end{split}	
\end{align}
From (\ref{1.12}) and (\ref{1.13}), we have
\begin{align}
\begin{split}
	&\quad (nC_1(L,\phi)^2-2(n+1)C_0(L)C_2(L,\phi))\wedge\omega^{m-2}\\
	 &=\left(\frac{1}{S_0(L)^4}\frac{n+2}{m(m-1)}|S_1(L,\phi)|^2_{\omega}-\frac{1}{S_0(L)^3}\frac{(n+2)(n+1)^2}{4\pi^2m(m-1)}\pi_*(|c(\phi)|^2_{\omega}\omega^n_F)\right)\omega^m\\
	 &=\frac{1}{S_0(L)^4}\frac{(n+2)(n+1)^2}{4\pi^2m(m-1)}\left(|\pi_*(c(\phi)\omega^n_F)|^2_{\omega}-S_0(L)\pi_*(|c(\phi)|^2_{\omega}\omega^n_F)\right)\omega^m.
\end{split}
\end{align}
 For any $p\in M$ and taking the normal coordinate system near $p$ with $g_{\alpha\b{\beta}}=\delta_{\alpha\b{\beta}}$, then
  \begin{align*}
    &\quad |\pi_*(c(\phi)\omega^n_F)|^2_{\omega}-S_0(L)\pi_*(|c(\phi)|^2_{\omega}\omega^n_F)\\
    &=\sum_{\alpha,\beta}\pi_*(c(\phi)_{\alpha\b{\beta}}\omega^n_{F})\pi_*(\o{c(\phi)_{\alpha\b{\beta}}}\omega^{n}_F)-\pi_*(\omega^n_F)\pi_*(\sum_{\alpha,\beta}|c(\phi)_{\alpha\b{\beta}}|^2\omega^n_F)\\
    &\leq \sum_{\alpha,\beta}\left((\pi_*(|c(\phi)_{\alpha\b{\beta}}|\omega^n_F))^2-\pi_*(\omega^n_F)\pi_*(|c(\phi)_{\alpha\b{\beta}}|^2\omega^n_F)\right)\leq 0,
  \end{align*}
  where the last inequality follows from Cauchy-Schwarz inequality. Thus,
  \begin{align}
  	(nC_1(L,\phi)^2-2(n+1)C_0(L)C_2(L,\phi))\wedge\omega^{m-2}\leq 0,
  \end{align}
  which proves (\ref{1.15}).
  Moreover, the equality holds  if and only if $(c(\phi)_{\alpha\beta})$ is constant along each fiber by Cauchy-Schwarz inequality, which is equivalent to
  $$c(\phi)=\frac{\int_{\mc{X}/M}c(\phi)\omega^n_F}{\int_{\mc{X}/M}\omega^n_F}=\frac{2\pi}{(n+1)S_0(L)}S_1(L,\phi).$$
  By taking integrate the both sides of (\ref{1.15}) over $M$ , we obtain the  topological inequality (\ref{1.14}).
\end{proof}
\begin{rem}
For the case of holomorphic vector bundle $E\to M$, the geodesic-Einstein  metric is equivalent to Finsler-Einstein metric (see \cite[Lemma 3.6]{Feng}). With the assumption of existence of Finsler-Einstein metric, Theorem \ref{thm3} and Theorem \ref{thm4} were proved in \cite[Theorem 3.7, 3.8]{FLW}. If $h$ is a Hermitian-Einstein metric on $E$, then Theorem \ref{thm4} is reduced to the classical Kobayashi-L\"ubke inequality \cite[Theorem 4.4.7]{Ko3} (see also \cite[Chapter 1, (1.8)]{Siu}), while Theorem \ref{thm3} is exactly the \cite[Theorem 1.2]{Diver}.
 \end{rem}

\subsection{Positivity of classes}

In this subsection, we will discuss the positivity of $S$-classes and $C$-classes.

Recall a smooth $(p,p)$-form $\Phi$ on a complex manifold $M$ is positive if for any $y\in M$ and any linearly independent $(1,0)$-type tangent vectors $v_1,v_2,\cdots, v_p$ at $y$, it holds that
\begin{align}\label{2.111}
(-\sqrt{-1})^{p^2}\Phi(v_1,v_2,\cdots, v_p,\bar v_1,\bar v_2,\cdots, \bar v_p)>0.
\end{align}

\begin{prop}\label{t6.2} If $\sqrt{-1}\p\b{\p}\phi>0$,  then the $k$-th  $S$-form $S_{k}(L,\phi)$ is a positive $(k,k)$-form for any $0\leq k\leq m$.
In particular, the class $S_k(L)$ can be represented by a positive $(k,k)$-form if $L$ is ample.
\end{prop}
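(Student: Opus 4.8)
The plan is to exploit the fact that the hypothesis $\sqrt{-1}\p\b{\p}\phi>0$ is much stronger than merely $\phi\in F^+(L)$: it says that $\omega_\phi:=\frac{\sqrt{-1}}{2\pi}\p\b{\p}\phi$ is a positive $(1,1)$-form on the \emph{total} space $\mc{X}$. By Lemma \ref{lemma0}, in the adapted coframe $\{dz^\alpha,\delta v^i\}$ this form splits with no mixed terms as $\omega_\phi=\frac{1}{2\pi}c(\phi)+\omega_F$, where $\omega_F:=\frac{\sqrt{-1}}{2\pi}\phi_{i\b{j}}\delta v^i\w\delta\b{v}^j$ is vertical and $\frac{1}{2\pi}c(\phi)$ is horizontal. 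Since the matrix of $\omega_\phi$ in this coframe is block diagonal, positivity of $\omega_\phi$ is equivalent to positivity of both blocks; in particular the horizontal block $c(\phi)$ is a positive definite Hermitian form on the horizontal lifts at \emph{every} point of $\mc{X}$, while $\omega_F$ is a positive volume-type form along each fiber.

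First I would compute $S_k(L,\phi)=\pi_*(\omega_\phi^{n+k})$ via the binomial expansion $\omega_\phi^{n+k}=\sum_{j}\binom{n+k}{j}(\tfrac{1}{2\pi}c(\phi))^j\w\omega_F^{n+k-j}$. A degree count shows that only the term $j=k$ survives $\pi_*$: the vertical factor $\omega_F^{n+k-j}$ vanishes for $j<k$ (its vertical degree exceeds the fiber dimension $n$), while for $j>k$ its vertical degree is $<n$, so the monomial is not relative top-degree and is annihilated by fiber integration. Hence
\begin{align*}
S_k(L,\phi)=\binom{n+k}{k}\Big(\frac{1}{2\pi}\Big)^k\pi_*\big(c(\phi)^k\w\omega_F^n\big).
\end{align*}

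To test positivity in the sense of (\ref{2.111}), I would fix $y\in M$ and linearly independent $(1,0)$-vectors $v_1,\dots,v_k$ at $y$, and take their horizontal lifts $\wt{v}_j=v_j^\alpha\frac{\delta}{\delta z^\alpha}$. Because interior product by a horizontal vector annihilates $\omega_F$ and acts only on $c(\phi)$, contracting $c(\phi)^k\w\omega_F^n$ with $\wt{v}_1,\dots,\wt{v}_k,\b{\wt{v}}_1,\dots,\b{\wt{v}}_k$ factors as [contracted $c(\phi)^k$] $\times\,\omega_F^n$, and the fiber integral becomes
\begin{align*}
(-\sqrt{-1})^{k^2}S_k(L,\phi)(v_1,\dots,v_k,\b{v}_1,\dots,\b{v}_k)=C_{n,k}\int_{\mc{X}_y}\Big[(-\sqrt{-1})^{k^2}(c(\phi)^k)(\wt{v}_1,\dots,\b{\wt{v}}_k)\Big]\,\omega_F^n
\end{align*}
for a positive constant $C_{n,k}$. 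Pointwise on the fiber the bracket equals, up to a positive normalization, the Gram determinant $\det\big(c(\phi)(\wt{v}_i,\b{\wt{v}}_j)\big)_{i,j}$, which is strictly positive since it is formed from the linearly independent vectors $v_i$ with respect to the positive definite Hermitian form $c(\phi)$. Integrating this positive function against the positive fiber volume $\omega_F^n$ yields a strictly positive number, so $S_k(L,\phi)$ is positive. For the final assertion, if $L$ is ample one chooses a metric $\phi$ with $\sqrt{-1}\p\b{\p}\phi>0$ on $\mc{X}$, and then $S_k(L,\phi)$ is a positive representative of the class $S_k(L)$.

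The main obstacle I anticipate is the sign and orientation bookkeeping needed to justify the displayed factorization—namely that $(-\sqrt{-1})^{k^2}$ times the horizontal contraction is exactly a positive multiple of the Gram determinant, and that the remaining vertical factor is the correctly oriented positive volume form $\omega_F^n$. The conceptual heart, by contrast, is the pointwise positive-definiteness of the horizontal block $c(\phi)$ over the whole fiber (not just over the base), which is precisely what the hypothesis $\sqrt{-1}\p\b{\p}\phi>0$ delivers through the block-diagonal decomposition of Lemma \ref{lemma0}.
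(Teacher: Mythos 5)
Your proposal is correct and follows essentially the same route as the paper: both use the block-diagonal decomposition of Lemma \ref{lemma0} to reduce $S_k(L,\phi)$ to $\binom{n+k}{k}(2\pi)^{-k}\pi_*(c(\phi)^k\wedge\omega_F^n)$ and then contract with horizontal lifts, integrating a pointwise-positive quantity against $\omega_F^n$. The only cosmetic difference is that you invoke the Gram determinant of the positive Hermitian form $c(\phi)$ where the paper diagonalizes $c(\phi)=\sqrt{-1}\sum\psi^\alpha\wedge\bar\psi^\alpha$ pointwise and exhibits the contraction as $k!\sum|\det(\psi^{\alpha_j}(X_i))|^2$ --- the same fact via Cauchy--Binet, which also settles the sign bookkeeping you flagged.
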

\begin{proof}
By Lemma \ref{lemma0}, $\sqrt{-1}\p\b{\p}\phi>0$ is equivalent to
\begin{align}
c(\phi)>0	
\end{align}
on horizontal directions.
 For any $(z_0,v_0)\in \mc{X}$, there exists a basis $\{\psi^1,\cdots,\psi^n\}$  such that
\begin{align}\label{2.112}
c(\phi)={\sqrt{-1}}\sum^n_{\alpha=1}\psi^\alpha\wedge\bar\psi^\alpha,
\end{align}
and so for $k\geq 1$,
\begin{align}\label{2.113}
(-{\sqrt{-1}})^{k^2}c(\phi)^k=k!\sum_{1\leq\alpha_1<\cdots<\alpha_k\leq n}\psi^{\alpha_1}\wedge\cdots\wedge\psi^{\alpha_k}\wedge\bar\psi^{\alpha_1}\wedge\cdots\wedge\bar\psi^{\alpha_k}.
\end{align}
Hence for any independent horizontal vectors $X_1,\cdots,X_k$ at $(z_0,v_0)$, one has
\begin{align}\label{2.114}
\begin{split}
&(-{\sqrt{-1}})^{k^2}c(\phi)^k(X_1,\cdots,X_k,\o{X_1},\cdots,\o{X_k})\\
&=k!\sum_{1\leq\alpha_1<\cdots<\alpha_k\leq n}\psi^{\alpha_1}\wedge\cdots\wedge\psi^{\alpha_k}\wedge\bar\psi^{\alpha_1}\wedge\cdots\wedge\bar\psi^{\alpha_k}
(X_1,\cdots,X_k,\o{ X_1},\cdots,\o{X_k})\\
&=k!\sum_{1\leq\alpha_1<\cdots<\alpha_k\leq n}|\psi^{\alpha_1}\wedge\cdots\wedge\psi^{\alpha_k}(X_1,\cdots,X_k)|^2> 0.
\end{split}
\end{align}
On the other hand, by Lemma \ref{lemma0}, by (\ref{2.114}), one has
\begin{align}\label{2.115}
S_{k}(L,\phi)=\int_{\mc{X}/M}(\sqrt{-1}\p\b{\p}\phi)^{n+k}=\frac{1}{(2\pi)^{k}}\binom{n+k}{k}\int_{\mc{X}/M}c(\phi)^{k}\omega^{n}_{F}.
\end{align}
For any $z_0\in M$ and any linearly independent $(1,0)$-type tangent vectors $Y_1,\cdots,Y_{k}$ in $T_{z_0}M$, one has
\begin{align*}
&(-\sqrt{-1})^{k^2}S_k(L,\phi)(Y_1,\cdots,Y_{k},\bar Y_1,\cdots,\bar Y_k)\\
&=\frac{1}{(2\pi)^{k}}\binom{n+k}{k}\int_{\mc{X}_{z_0}}
(-\sqrt{-1})^{k^2}c(\phi)^{k}(Y^h_1,\cdots,Y^h_{k},\o{ Y^h_1},\cdots,\o{ Y^h_{k}})\omega^{n}_{F}>0,
\end{align*}
where $Y^h$ denote the horizontal lifting along the fibre $P(E_{z_0})$ of a vector $Y\in T_{z_0}M$. Thus we conclude $S_k(L,\phi)$ is a positive $(k,k)$-form. Since $[S_k(L,\phi)]=S_k(L)$, so the class $S_k(L)$ can be represented by a positive $(k,k)$-form  if $L$ is ample.
\end{proof}

\begin{cor}\label{cor11}
Let $M$ be a compact complex surface and $L$ be an ample line bundle over $\mc{X}$, $\pi:\mc{X}\to M$. If moreover, $L$ admits a geodesic-Einstein metric, then
\begin{align*}
\int_M C_2(L)>0.	
\end{align*}
\end{cor}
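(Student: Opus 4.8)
The plan is to specialize to $m=\dim M=2$ and combine the topological Kobayashi--L\"ubke inequality (\ref{1.14}) of Theorem \ref{thm4} with the positivity of the first $S$-form from Proposition \ref{t6.2}. The geodesic-Einstein hypothesis and the ampleness hypothesis will play two distinct roles, and the bridge between them is that all quantities involved are intersection numbers, hence metric-independent.

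First I would set $m=2$ in (\ref{1.14}). Then $[\omega]^{m-2}=[\omega]^0=1$, and since $L$ admits a geodesic-Einstein metric, Theorem \ref{thm4} gives
$$n\int_M C_1(L)^2\leq 2(n+1)C_0(L)\int_M C_2(L).$$
Recalling from (\ref{1.10}) that $C_0(L)=1/S_0(L)$ with $S_0(L)=\int_{\mc{X}/M}c_1(L)^n\in\mb{N}_+$, the factor $2(n+1)C_0(L)$ is a strictly positive constant, so it suffices to show that the left-hand side is strictly positive, i.e.\ that $\int_M C_1(L)^2>0$ (using $n=\dim\mc{X}/M\geq 1$).

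Next I would prove $\int_M C_1(L)^2>0$; this is exactly where ampleness of $L$, and not merely the existence of a fiberwise-positive geodesic-Einstein metric, is needed. Because $\int_M C_1(L)^2$ depends only on the class $C_1(L)\in H^2(M,\mb{Q})$, I may evaluate it using any convenient metric. Choosing $\phi$ with $\sqrt{-1}\p\b{\p}\phi>0$ on all of $\mc{X}$---which exists since $L$ is ample---Proposition \ref{t6.2} shows that $S_1(L)$ is represented by the positive $(1,1)$-form $S_1(L,\phi)$. On the compact surface $M$ the square of a positive $(1,1)$-form is a positive top form, so $\int_M S_1(L,\phi)^2>0$; combined with $C_1(L)=-S_1(L)/S_0(L)^2$ from (\ref{1.10}) this yields
$$\int_M C_1(L)^2=\frac{1}{S_0(L)^4}\int_M S_1(L)^2>0.$$

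Finally I would read off the conclusion: the first display now forces $2(n+1)C_0(L)\int_M C_2(L)\geq n\int_M C_1(L)^2>0$, and dividing by the positive constant $2(n+1)C_0(L)$ gives $\int_M C_2(L)>0$. The step I expect to require the most care is the logical separation of the two hypotheses: the geodesic-Einstein metric supplies the inequality (\ref{1.14}) but is in general only positive along the fibers, so it cannot be used to represent $S_1(L)$ by a positive form; that positive representative must be produced separately from ampleness via Proposition \ref{t6.2}, and the two are reconciled precisely because $\int_M C_1(L)^2$ is a topological invariant independent of the chosen metric.
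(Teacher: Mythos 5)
Your proposal is correct and takes essentially the same route as the paper: specialize the Kobayashi--L\"ubke type inequality (\ref{1.14}) of Theorem \ref{thm4} to $m=2$, rewrite $C_1(L)^2$ as $S_1(L)^2/S_0(L)^4$ via (\ref{1.10}), and conclude $\int_M S_1(L)^2>0$ from the positivity of the $S$-form in Proposition \ref{t6.2}, which requires ampleness. Your explicit observation that the geodesic-Einstein metric is in general only fiberwise positive, so the positive representative of $S_1(L)$ must come from a separate globally positive metric and the two are reconciled by topological invariance, is a careful point the paper's one-line proof leaves implicit.
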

\begin{proof}
 By Theorem \ref{thm4}, (\ref{1.10}) and noting $\dim M=2$, then
\begin{align}\label{1.16}
\begin{split}
\int_M C_2(L) &\geq \int_M \frac{nC_1(L)^2}{2(n+1)C_0(L)}\\
&=\frac{n}{2(n+1)S_0(L)^3}\int_M S_1(L)^2>0,
\end{split}
\end{align}
where the last inequality follows from Theorem \ref{t6.2}.
\end{proof}

\begin{rem}
In terms of complex Finsler vector bundles, Theorem \ref{t6.2} was proved in \cite[Theorem 2.8]{FLW} with the assumption of positive or negative Kobayashi curvature. On the other hand, if one considers an ample vector bundle $E$, which is equivalent to the pair $(P(E^*), \mc{O}_{P(E^*)}(1))$ with positive line bundle $\mc{O}_{P(E^*)}(1)$, by \cite[Theorem 2.5]{BG}, one has $\int_M c_m(E)>0$. Since the $C$-classes $C_i(L)$ can be viewed as a generalization of Chern classes of a holomorphic vector bundle, so it is natural to ask  whether $(-1)^m\int_M C_m(L)>0$ if $L$ is ample.
\end{rem}

\section{Some examples}\label{sec example}
In this section, we will discuss some examples on the geodesic-Einstein metrics.
\begin{ex}[Product manifolds]\label{exam1}
Let $X$ and $M$ be two compact complex manifolds and consider the holomorphic fibration $\pi_1: M\times X\to M$. For any line bundle $L_1$ over $M$ and any ample line bundle $L_2$ over $X$, then the line bundle $L:=\pi_1^*L_1+\pi_2^*L_2$ over $X\times M$ admits a geodesic-Einstein metric with respect to any given K\"ahler metric $\omega$ on $M$, where $\pi_2: M\times X\to X$. In fact, for any K\"ahler metric $\omega$ on $M$, we take a Hermitian-Einstein metric $\varphi_1$ on $L_1$, i.e. $tr_{\omega}(\sqrt{-1}\p\b{\p}\varphi_1)=\text{constant}$, and take a metric $\varphi_2$ on $L_2$ such that $\sqrt{-1}\p\b{\p}\varphi_2>0$. Therefore, $\varphi=\pi^*_1\varphi_1+\pi^*\varphi_2$ is a metric on $L$ and its curvature is $\p\b{\p}\varphi=\pi_1^*\p\b{\p}\varphi_1+\pi^*_2\p\b{\p}\varphi_2$. So $L$ is a relative ample line bundle and
\begin{align*}
tr_{\omega}c(\phi)=tr_{\omega}(\sqrt{-1}\p\b{\p}\varphi_1)=\text{constant}. 	
\end{align*}
\end{ex}

\begin{ex}[Ruled manifolds]
	An algebraic manifold $\mc{X}$ is said to be a {\it ruled manifold} if $\mc{X}$ is a holomorphic $\mb{P}^r$-bundle with structure group $PGL(r+1,\mb{C})=GL(r+1,\mb{C})/\mb{C}^*$ (see e.g. \cite[Section 4.2]{Aikou}). By \cite[Proposition 4.3]{Aikou}, every ruled manifold $\mc{X}$ over a compact Riemann surface
$M$ is holomorphically isomorphic to $P(E)$ for some holomorphic vector bundle $E\to M$ of $\text{rank}(E)=r+1$. Such a bundle $E$ is uniquely determined up to tensor product with a holomorphic line bundle. Since $\text{Pic}(P(E))=\text{Pic}(M)\oplus \mb{Z}\mc{O}_{P(E)}(1)$, so any line bundle $L$ over $P(E)$ is the form $L:=\pi^*L_1+k\mc{O}_{P(E)}(1)$ for some line bundle $L_1$ over $M$ and $k\in \mb{Z}$. Moreover, $L$ is relative ample if and only if $k>0$.
	If there exists a Hermitian-Einstein metric on $E$, then $L$ can admit a geodesic-Einstein metric. In fact, by a direct calculation, any Hermitian-Einstein metric on $E$ induces a natural geodesic-Einstein metric on $\mc{O}_{P(E)}(1)$, so is $k\mc{O}_{P(E)}(1)$. Similar as Example \ref{exam1}, the geodesic-Einstein metric on $k\mc{O}_{P(E)}(1)$ and the Hermitian-Einstein metric on $L_1$ give a geodesic-Einstein metric on $L$.
\end{ex}

\begin{ex}[Geodesic curve]
	From Definition \ref{GeoEin}, one can allow $M$ is a non-compact manifold. Now we assume that $M$ is a Riemann surface with boundary and $\mc{X}=X\times M$ for some compact complex manifold $X$, the line bundle $L\to \mc{X}$ is taken to be the pullback of some ample line bundle over $X$. Then $tr_{\omega}c(\phi)=0$ is equivalent to $c(\phi)=0$, which is also equivalent to
	$(\sqrt{-1}\p\b{\p}\phi)^{n+1}=0$ (see \cite{Don3, Semmes}). As seen in \cite[Section 2.3]{Chen} or \cite[Section 3]{Mab}, one can define a Riemannian metric on the space of K\"ahler potentials  in a fixed K\"ahler class, which is an infinite dimensional manifold and equipped a $L^2$-norm, then its geodesic equation is exactly the equation $c(\phi)=0$.  By \cite[Theorem 3]{Chen}, the following equation
	\begin{align*}
	\begin{cases}
		&(\sqrt{-1}\p\b{\p}\phi)^{n+1}=0\quad \text{in}\quad \mc{X}\\
		&\phi=\phi_0\quad \text{in}\quad \p\mc{X}
	\end{cases}
	\end{align*}
has a $C^{1,1}$-solution for any  metric $\phi_0$ in $F^+(L|_{\p\mc{X}})$.
\end{ex}

\begin{ex}[Complex quotient equation]
	From (\ref{geolam}), the geodesic-Einstein equation $tr_{\omega}c(\phi)=\lambda$ is equivalent to
	\begin{align}\label{geoe1}
		c\omega^m\wedge(\sqrt{-1}\p\b{\p}\phi)^n=\omega^{m-1}\wedge (\sqrt{-1}\p\b{\p}\phi)^{n+1},
	\end{align}
where $c=\frac{\lambda(n+1)}{m}$. If moreover, we assume that $L$ is an ample line bundle,  and one may consider a geodesic-Einstein metric $\phi$ with positive curvature, i.e. $tr_{\omega}c(\phi)=\lambda$ and $\sqrt{-1}\p\b{\p}\phi>0$.
Let $\phi_0$ be a metric on $L$ with $\sqrt{-1}\p\b{\p}\phi_0>0$, then $\omega_{\epsilon}=\omega+\epsilon\sqrt{-1}\p\b{\p}\phi_0$ is a K\"ahler metric on
	$\mc{X}$. For any $\epsilon>0$, we consider the following equation
	\begin{align}\label{quotient}
		c_{\epsilon}\omega_{\epsilon}^m\wedge(\sqrt{-1}\p\b{\p}\phi)^n=\omega_{\epsilon}^{m-1}\wedge (\sqrt{-1}\p\b{\p}\phi)^{n+1},
	\end{align}
	where $c_{\epsilon}=\int_{\mc{X}}\omega_{\epsilon}^{m-1}\wedge (\sqrt{-1}\p\b{\p}\phi)^{n+1}/\int_{\mc{X}}\omega_{\epsilon}^m\wedge(\sqrt{-1}\p\b{\p}\phi)^n$.
The above equation is studied in \cite{Phong, Sun} by using the following parabolic flow
\begin{align}\label{para1}
\begin{split}
\frac{\p u}{\p t}=\log \frac{\omega_{\epsilon}^m\wedge (\sqrt{-1}\p\b{\p}\phi)^{n}}{\omega_{\epsilon}^{m-1}\wedge (\sqrt{-1}\p\b{\p}\phi)^{n+1}}+\log c_{\epsilon},
\end{split}
\end{align}
where $u=\phi-\phi_0$. From \cite[Theorem 1.2]{Sun} or \cite[Corollary 7]{Phong}, if there is a $\mc{C}$-subsolution to (\ref{para1}) (see \cite[Definition 1.1]{Sun}), then there exists a long time solution $u$ to (\ref{para1}). Moreover, the normalization $\hat{u}$ (see \cite[(2.34)]{Sun}) of $u$ is $C^{\infty}$ convergent to a smooth solution $\hat{u}_{\infty}$.  Thus $\sqrt{-1}\p\b{\p}\phi=\sqrt{-1}\p\b{\p}(\phi_0+\hat{u}_{\infty})$ solves (\ref{quotient}). In particular, if $M$ is a Riemann surface, then (\ref{quotient}) becomes
\begin{align}\label{quotient1}
		c_{\epsilon}\omega_{\epsilon}\wedge(\sqrt{-1}\p\b{\p}\phi)^{n}= (\sqrt{-1}\p\b{\p}\phi)^{n+1},
	\end{align}
	which is the Euler equation of $J$-functional (see \cite{Don4}). From \cite[Theorem 1.1]{Song}, there exists a solution to (\ref{quotient1}) if and only if there exists a metric $\sqrt{-1}\p\b{\p}\phi'>0$ such that
	\begin{align}
	((n+1)\sqrt{-1}\p\b{\p}\phi'-nc_{\epsilon}\omega_{\epsilon})	\wedge (\sqrt{-1}\p\b{\p}\phi')^{n-1}>0.
	\end{align}
Comparing with the equation (\ref{quotient}) and (\ref{quotient1}), the  metric $\omega$ in geodesic-Einstein equation (\ref{geoe1}) is degenerate along the fibers of $\mc{X}$. Thus, in order to solve the geodesic-Einstein equation (\ref{geoe1}), one may consider the limits of the solutions of (\ref{quotient}), (\ref{quotient1})   as $\epsilon\to 0$.
\end{ex}

\begin{ex}[Calabi-Yau family]
	From (\ref{geoe1}), one can define a geodesic-Einstein metric for any holomorphic line bundle (need not relative ample). More precisely, for any line bundle $L$ over $\mc{X}$, a smooth metric $\phi$ on $L$ is called geodesic-Einstein if
	\begin{align}\label{geoe2}
		c\omega^m\wedge(\sqrt{-1}\p\b{\p}\phi)^n=\omega^{m-1}\wedge (\sqrt{-1}\p\b{\p}\phi)^{n+1}
	\end{align}
	for some constant $c$.
	
	Following \cite{Braun}, let $\pi:\mc{X}\to M$ be a holomorphic, polarized family of Calabi-Yau manifolds $\mc{X}_s=\pi^{-1}(s)$, $s\in M$, i.e.
compact manifolds with $c_1(\mc{X}_s)=0$, equipped with Ricci flat K\"ahler forms $\omega_{\mc{X}_s}$. The relative volume
form $\omega^n_{\mc{X}/M}=g dV$ induces a hermitian metric $g^{-1}$ on the relative canonical bundle $K_{\mc{X}/M}$. By \cite[Formula (1)]{Braun}, one has
	\begin{align}\label{Calabi-Yau}
	2\pi c_1(K_{\mc{X}/M}, g^{-1})=\sqrt{-1}\p\b{\p}\log g=\frac{1}{\text{Vol}\mc{X}_s}\pi^*\omega^{WP}.
	\end{align}
Now we take $L=K_{\mc{X}/M}$, $\phi=\log g$ and for any K\"ahler metric $\omega$ on $M$. By (\ref{Calabi-Yau}), one easily sees that the metric $\phi$ satisfies the equation (\ref{geoe2}), i.e., $\phi$ is a geodesic-Einstein metric on $L$.
	\end{ex}

\section{Appendix: Hermitian-Einstein metrics on quasi-vector bundles (By Xu Wang)}\label{Sec HE}

In this section, we will prove that the geodesic-Einstein metric is equivalent to a Hermitian-Einstein metric on a quasi-vector bundle. For more details on quasi-vector bundle, one can refer to \cite{Bern5, Wang}.

Following \cite{Bern5, Wang}, we shall use the following setup:
\begin{itemize}
\item[(1)] $\pi: \mc{X}\to B$ is a proper holomorphic submersion from a complex manifold $\mc{X}$ to another complex manifold $B$, each fiber $X_t:=\pi^{-1}(t)$ is an $n$-dimensional compact complex manifold;
\item[(2)] $E$ is a holomorphic vector bundle over $\mc{X}$, $E_t:=E|_{X_t}$;
\item[(3)] $\omega$ is a d-closed $(1,1)$-form on $\mc{X}$ and is positive on each fiber, $\omega^t:=\omega|_{X_t}$;
\item[(4)] $h_E$ is a smooth Hermitian metric on $E$, $h_{E_t}:=h_E|_{E_t}$.
\end{itemize}
\begin{defn}[\cite{Bern5}]
	Let $V:=\{V_t\}_{t\in B}$ be a family of $\mb{C}$-vector spaces over $B$. Let $\Gamma$ be a $C^{\infty}(B)$-submodule of the space of all sections of $V$. We call $\Gamma$ a smooth quasi-vector bundle structure on $V$ if each vector of the fiber $V_t$ extends to a section in $\Gamma$ locally near $t$.
\end{defn}
For each $t\in B$, let us denote by $\mc{A}^{p,q}(E_t)$ the space of smooth $E_t$-valued $(p,q)$-forms on $X_t$. Consider
$$\mc{A}^{p,q}:=\{\mc{A}^{p,q}(E_t)\}_{t\in B},$$
and denote by $\mc{A}^{p,q}(E)$ the space of smooth $E$-valued $(p,q)$-forms on $\mc{X}$. Let us define
$$\Gamma^{p,q}:=\{u: t\mapsto u^t\in\mc{A}^{p,q}(E_t): \exists\, {\bf u}\in \mc{A}^{p,q}(E), {\bf u}|_{X_t}=u^t,\forall t\in B\}.$$
We call ${\bf u}$ above a smoooth representative of $u\in \Gamma^{p,q}$, each $\Gamma^{p,q}$ defines a quasi-vector bundle structure on $\mc{A}^{p,q}$. Denote
$$(\mc{A},\Gamma):=\oplus_{k=0}^{2n}(\mc{A}^k,\Gamma^k),\quad (\mc{A}^k,\Gamma^k):=\oplus_{p+q=k}(\mc{A}^{p,q}, \Gamma^{p,q}).$$
\begin{defn}
The Lie-derivative connection, say $\n^{\mc{A}}$, on $(\mc{A},\Gamma)$ is defined as follows:
$$\n^{\mc{A}}u:=\sum dt^j\otimes [d^E,\delta_{V_j}]{\bf u}+\sum d\o{t}^j\otimes [d^E,\delta_{\bar{V}_j}]{\bf u},\quad u\in \Gamma,$$
where $d^E:=\b{\p}+\p^E$ denotes the Chern connection on $(E, h_E)$ and each $V_j$ is the horizontal lift of $\p/\p t^j$ with respect to $\omega$. 	
\end{defn}
Let $D^{\mc{A}}$ denote the connection on each $(V^{p,q},\Gamma^{p,q})$ induced by $\n^{\mc{A}}$,
\begin{align}\begin{split}D^{\mc{A}}u:&=\sum dt^j\otimes [\p^E,\delta_{V_j}]{\bf u}+\sum d\o{t}^j\otimes [\o{\p},\delta_{\o{V}_j}]{\bf u},\quad u \in \Gamma^{p,q}\\
&=\n^A-\left(\sum dt^j\otimes \kappa_j+\sum d\b{t}^j\otimes \kappa_{\b{j}}\right),\end{split}\end{align}
where the non-cohomological Kodaira-Spencer actions (see \cite[Definition 5.6]{Wang})
$$\kappa_j u:=[\b{\p},\delta_{V_j}]{\bf u},\quad \kappa_{\b{j}}u:=[\p^E,\delta_{\o{V}_j}]{\bf u}.$$
Then $D^{\mc{A}}$ defines a Chern connection on each $(\mc{A}^{p,q},\Gamma^{p,q})$ (see e.g. \cite[Theorem 5.6]{Wang}).
The curvature of the Chern connection $D^{\mc{A}}$ is given by
\begin{align}\label{Curvature of D}
	(D^{\mc{A}})^2=(\n^{\mc{A}})^2-\sum [\kappa_j, \kappa_{\b{k}}]dt^j\wedge d\b{t}^k,
\end{align}
see \cite[(5.2)]{Wang}. Here the curvature of Lie-derivative connection is
\begin{align}
\begin{split}
	(\n^{\mc{A}})^2 &=\sum [[d^E,\delta_{V_j}],[d^E,\delta_{\b{V}_k}]]dt^j\wedge d\b{t}^k.
	\end{split}
\end{align}
The $L^2$-metric on each $\mc{A}^{p,q}$ is defined by
\begin{align}\label{L2-metric}
(u,v)=\int_{X_t}\langle u, v\rangle\frac{\omega^n}{n!}.	
\end{align}
Here $\langle \cdot, \cdot\rangle$ denotes the point-wise inner on $\mc{A}^{p,q}$ with respect to $\omega|_{X_t}$ and $h_{E_t}$.
\begin{defn}\label{defn of HE}
The $L^2$-metric (\ref{L2-metric}) on  $\mc{A}^{p,q}$ is called Hermitian-Einstein with respect to a Hermitian metric $\omega_B=\sqrt{-1}g_{i\b{j}}dt^i\wedge d\b{t}^j$ if
\begin{align}
\Lambda_{\omega_B}(D^{\mc{A}})^2=\lambda \text{Id}	
\end{align}
for some constant $\lambda$.
\end{defn}
Now we take $p=q=0$ and $E$ is the trivial bundle. Let $L$ be a relative ample line bundle over $\mc{X}$, i.e. there exists a metric $\phi$ on $L$ such that its curvature $\omega:=\sqrt{-1}\p\b{\p}\phi$ is positive along each fiber. The $L^2$-metric (\ref{L2-metric}) on $\mc{A}^{0,0}$ is
\begin{align}\label{L2-metric0}
	(u,v)=\int_{X_t} u\b{v}\frac{\omega^n}{n!}.
\end{align}
We call a metric $\phi$ on $L$ is weak geodesic-Einstein with respect to $\omega_B$ if $tr_{\omega_B}c(\phi)=\pi^*f(z)$ for some function on $B$.
\begin{prop}\label{prop5.4}
	$\phi$ is a weak geodesic-Einstein metric on $L$ if and only if the metric $L^2$-metric (\ref{L2-metric0}) is a Hermitian-Einstein metric on $\mc{A}^{0,0}$.
\end{prop}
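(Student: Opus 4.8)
The plan is to reduce the Hermitian-Einstein condition to an explicit computation of the Chern curvature of the quasi-vector bundle $\mc{A}^{0,0}$ in the special case $p=q=0$ with $E$ trivial, and then to match this curvature against the geodesic curvature $c(\phi)$. First I would specialize the general curvature formula (\ref{Curvature of D}) to $(p,q)=(0,0)$. On $(0,0)$-forms the Kodaira-Spencer actions $\kappa_j,\kappa_{\b{j}}$ shift the bidegree out of $(0,0)$ and hence act as zero, so $(D^{\mc{A}})^2=(\n^{\mc{A}})^2$; concretely, writing the $\omega$-horizontal lifts as $V_j=\p/\p t^j-N^i_j\,\p/\p v^i$ with $N^i_j=\phi_{j\b{l}}\phi^{i\b{l}}$, one finds $D^{\mc{A}}u=\sum dt^j\otimes V_j(u)+\sum d\b{t}^k\otimes \o{V}_k(u)$ for a fibrewise function $u$.

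The next step is to verify that this horizontal-transport connection is indeed the Chern connection of the $L^2$-metric (\ref{L2-metric0}) and to compute its curvature. The key point is that the fibre volume is preserved along horizontal lifts: since $V_j$ is $\omega$-orthogonal to the vertical directions and $\omega$ is of type $(1,1)$, the restriction $(i_{V_j}\omega)|_{X_t}$ vanishes, whence $(L_{V_j}\frac{\omega^n}{n!})|_{X_t}=\frac{\omega^{n-1}}{(n-1)!}\w d_{X_t}(i_{V_j}\omega|_{X_t})=0$. This gives $\p_{t^j}(u,v)=(V_j u,v)+(u,\o{V}_j v)$, so no Christoffel correction appears and the connection is metric compatible. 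Differentiating once more yields the curvature $(D^{\mc{A}})^2=\sum [V_j,\o{V}_k]\,dt^j\w d\b{t}^k$, where each $[V_j,\o{V}_k]$ is a purely vertical vector field. In particular $(D^{\mc{A}})^2$ carries no zeroth-order term, the constant function $\mathbf{1}$ is a global flat section, and so any Hermitian-Einstein constant must be $\lambda=0$. Consequently the $L^2$-metric is Hermitian-Einstein if and only if the vertical field $\Lambda_{\omega_B}(D^{\mc{A}})^2=g^{j\b{k}}[V_j,\o{V}_k]$ annihilates every fibrewise function, i.e.\ vanishes identically.

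The crux is therefore to identify $g^{j\b{k}}[V_j,\o{V}_k]$ with the fibrewise gradient of $tr_{\omega_B}c(\phi)$. Using $[V_j,\o{V}_k]=\o{V}_k(N^i_j)\frac{\p}{\p v^i}-V_j(\o{N}^l_k)\frac{\p}{\p \b{v}^l}$, the identity $N^i_j\phi_{i\b{p}}=\phi_{j\b{p}}$, and the expression $c(\phi)_{j\b{k}}=\phi_{j\b{k}}-\phi_{j\b{l}}\phi^{i\b{l}}\phi_{i\b{k}}$ from Lemma \ref{lemma0}, a direct computation is expected to give, up to a nonzero constant,
\begin{align*}
g^{j\b{k}}\,\o{V}_k(N^i_j)\,\phi_{i\b{p}}=\p_{\b{v}^p}\big(tr_{\omega_B}c(\phi)\big),
\end{align*}
so that the $(1,0)$-part of $g^{j\b{k}}[V_j,\o{V}_k]$ is the fibrewise $\phi$-gradient of $tr_{\omega_B}c(\phi)$ (its $(0,1)$-part being determined by conjugation). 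Hence the field vanishes precisely when $tr_{\omega_B}c(\phi)$ is holomorphic along each compact fibre $X_t$, which forces it to be constant on $X_t$, i.e.\ $tr_{\omega_B}c(\phi)=\pi^*f$ for some $f$ on $B$. Reading this equivalence in both directions gives the proposition: weak geodesic-Einstein $\Leftrightarrow$ $g^{j\b{k}}[V_j,\o{V}_k]=0$ $\Leftrightarrow$ Hermitian-Einstein.

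The main obstacle I expect is the gradient identity displayed above: it requires organizing the third-order derivatives of $\phi$ produced by $\o{V}_k(N^i_j)$, exploiting the symmetry of mixed partials and the structure of the horizontal lift, and checking that exactly the fibrewise derivative of the trace $g^{j\b{k}}c(\phi)_{j\b{k}}$ survives. A secondary point requiring care is the operator-level reading of the Einstein equation: one must test $\Lambda_{\omega_B}(D^{\mc{A}})^2=\lambda\,\text{Id}$ against all fibrewise functions and use compactness of the fibres to pass from fibrewise holomorphicity to fibrewise constancy.
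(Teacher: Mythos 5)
Your proposal is correct and follows essentially the same route as the paper: specialize the curvature formula to $(0,0)$-forms with trivial $E$ (so the Kodaira--Spencer terms drop and $(D^{\mc{A}})^2=\sum[V_j,\o{V}_k]\,dt^j\wedge d\b{t}^k$), identify the contracted vertical commutator with the fibrewise $\phi$-gradient of $tr_{\omega_B}c(\phi)$, and conclude $\lambda=0$ together with fibrewise constancy of the trace. The one computation you defer, $g^{j\b{k}}\,\o{V}_k(N^i_j)\phi_{i\b{p}}=\p_{\b{v}^p}\bigl(tr_{\omega_B}c(\phi)\bigr)$, is precisely the commutator formula the paper imports from \cite[Lemma 6.1]{Wang1}, so it holds as you stated it.
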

\begin{proof}
	By (\ref{Curvature of D}) and noting that $p=q=0$, $E$ is trivial, one has
	\begin{align}
	\begin{split}
		(D^{\mc{A}})^2u &=(\n^{\mc{A}})^2u=\sum [[d^E,\delta_{V_j}],[d^E,\delta_{\b{V}_k}]]dt^j\wedge d\b{t}^k\\
		&=\sum[V_j, V_{\b{k}}]u dt^j\wedge d\b{t}^k
		\end{split}
	\end{align}
for any $u\in \mc{A}^{0,0}$. By \cite[Lemma 6.1]{Wang1}, we have
$$[V_j, V_{\b{k}}]=(c(\phi)_{j\b{k}})_{\b{\lambda}}\phi^{\b{\lambda}\nu}\frac{\p}{\p v^{\nu}}-(c(\phi)_{j\b{k}})_{\nu}\phi^{\b{\lambda}\nu}\frac{\p}{\p\b{v}^{\lambda}},$$
where $c(\phi)_{j\b{k}}=\langle V_j,V_k\rangle_{\omega}$ is the coefficient of geodesic curvature $c(\phi)$.  Let $\omega_B=\sqrt{-1}g_{i\b{j}}dt^i\wedge d\b{t}^j$ be a Hermitian metric on $B$, then
\begin{align}
\begin{split}
\Lambda_{\omega_B}(D^{\mc{A}})^2u &=	(g^{j\b{k}}c(\phi)_{j\b{k}})_{\b{\lambda}}\phi^{\b{\lambda}\nu}\frac{\p}{\p v^{\nu}}u-(g^{i\b{k}}c(\phi)_{j\b{k}})_{\nu}\phi^{\b{\lambda}\nu}\frac{\p}{\p\b{v}^{\lambda}}u\\
&=(tr_{\omega_B}c(\phi))_{\b{\lambda}}\phi^{\b{\lambda}\nu}\frac{\p}{\p v^{\nu}}u-((tr_{\omega_B}c(\phi))_{\nu}\phi^{\b{\lambda}\nu}\frac{\p}{\p\b{v}^{\lambda}}u.
\end{split}
\end{align}
Thus, for any $u\in \mc{A}^{0,0}$, $\Lambda_{\omega_B}(D^{\mc{A}})^2u=\lambda u$ if and only if $\lambda=0$ and $tr_{\omega_B}c(\phi)=\pi^*f(t)$ for some function $f(z)$ on $B$, which completes the proof.
\end{proof}

Now we assume that $B$ is compact, then for any smooth function $f$,  there is a smooth solution $\tilde{f}$ solves
\begin{align}
\Delta \tilde{f}+f(t)=\int_B f(t) \frac{\omega^m_{B}}{m!}
\end{align}
where $\Delta:=g^{i\b{j}}\p_i\p_{\b{j}}$ and $m=\dim B$. Denote $\tilde{\phi}=\phi+\tilde{f}$. If $tr_{\omega_B}c(\phi)=\pi^*f(z)$, then
\begin{align}
tr_{\omega_B}c(\tilde{\phi})=\Delta \tilde{f}+f(z)=	\int_B f(t) \frac{\omega^m_{B}}{m!},
\end{align}
which implies that $\tilde{\phi}$ is a geodesic-Einstein metric on $L$. Combining with Proposition \ref{prop5.4}, we have
\begin{cor}\label{cor5.5}
If $B$ is compact, then up to a smooth function on 	$B$, $\phi$ is a  geodesic-Einstein metric on $L$ if and only if the metric $L^2$-metric (\ref{L2-metric0}) is a Hermitian-Einstein metric on $\mc{A}^{0,0}$.
\end{cor}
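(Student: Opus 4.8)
The plan is to obtain the corollary by combining Proposition \ref{prop5.4} with the solvability of a Poisson-type equation on the compact base $B$. The guiding observation I would establish first is that replacing the weight $\phi$ by $\phi+\pi^*g$, for a smooth function $g$ on $B$, alters neither the $L^2$-metric (\ref{L2-metric0}) nor the curvature $(D^{\mc{A}})^2$ on $\mc{A}^{0,0}$: since $g$ is constant along the fibers, $(\pi^*g)_{i\b j}=0$, so $\omega|_{X_t}$, the horizontal lifts $V_j$, and the brackets $[V_j,V_{\b k}]$ appearing in the proof of Proposition \ref{prop5.4} are all unchanged, whereas the geodesic curvature only acquires the horizontal term $\sqrt{-1}\p\b\p g$, giving $tr_{\omega_B}c(\phi+\pi^*g)=tr_{\omega_B}c(\phi)+\t g$.

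For the direction from Hermitian-Einstein to geodesic-Einstein I would argue as follows. Assuming the $L^2$-metric (\ref{L2-metric0}) is Hermitian-Einstein, Proposition \ref{prop5.4} gives that $\phi$ is weak geodesic-Einstein, i.e. $tr_{\omega_B}c(\phi)=\pi^*f$ for some $f\in C^\infty(B)$. I would then solve $\t\tilde f=c_0-f$ on the compact manifold $B$ for a suitably normalized constant $c_0$; here $\t=g^{i\b j}\p_i\p_{\b j}$ is second-order elliptic with one-dimensional kernel (the constants), so by the Fredholm alternative the equation is solvable once $c_0$ is chosen to put the right-hand side in the image of $\t$. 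Setting $\tilde\phi=\phi+\pi^*\tilde f$ then yields $tr_{\omega_B}c(\tilde\phi)=f+\t\tilde f=c_0$, a constant; since $\omega|_{X_t}$ is unchanged we still have $\tilde\phi\in F^+(L)$, so $\tilde\phi$ is a genuine geodesic-Einstein metric and $\phi$ is geodesic-Einstein up to a function on $B$.

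The converse is immediate from the invariance observed above: if $\phi+\pi^*g$ is geodesic-Einstein for some $g\in C^\infty(B)$, then it is in particular weak geodesic-Einstein, so by Proposition \ref{prop5.4} its $L^2$-metric is Hermitian-Einstein; but this is the same metric with the same curvature as that of $\phi$, so (\ref{L2-metric0}) is Hermitian-Einstein. Together the two directions give the stated equivalence.

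The step I expect to be the main obstacle is the production of the correcting function $\tilde f$: one must solve $\t\tilde f=c_0-f$ on $B$ for the (generally non-self-adjoint) elliptic operator $\t$ associated to an arbitrary Hermitian metric $\omega_B$. This requires identifying the one-dimensional cokernel of $\t$ and fixing $c_0$ so that the solvability condition holds; the remaining verifications—that the shift $\pi^*\tilde f$ keeps $\tilde\phi$ fiberwise positive and leaves both the $L^2$-metric and $(D^{\mc{A}})^2$ intact—are the routine points I flagged in the opening observation.
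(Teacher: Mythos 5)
Your proposal is correct and takes essentially the same route as the paper: both pass through Proposition \ref{prop5.4} and then upgrade the weak geodesic-Einstein condition $tr_{\omega_B}c(\phi)=\pi^*f$ to a genuine one by solving a Poisson equation on the compact base and replacing $\phi$ by $\phi+\pi^*\tilde f$. If anything you are more careful than the paper at the one delicate point: the paper simply asserts solvability of $\Delta\tilde f+f=\int_B f\,\omega_B^m/m!$, whereas your Fredholm-alternative determination of the constant $c_0$ via the (one-dimensional, positive) kernel of the adjoint of the generally non-self-adjoint operator $\Delta=g^{i\b j}\p_i\p_{\b j}$ is what is actually needed for an arbitrary Hermitian $\omega_B$, and your explicit check that $\phi\mapsto\phi+\pi^*g$ leaves the fiberwise form $\omega|_{X_t}$, the horizontal lifts, the $L^2$-metric, and $(D^{\mc{A}})^2$ unchanged makes precise the step the paper leaves implicit in ``combining with Proposition \ref{prop5.4}.''
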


\end{document}